\documentclass{amsart}
\usepackage{amssymb}
\usepackage{amsmath}
\usepackage{latexsym}
\usepackage{graphicx}
\usepackage{amscd}
\usepackage{mathrsfs}
\usepackage[english]{babel}
\usepackage{hyperref}
\usepackage{verbatim}
\usepackage{tikz-cd}
\usepackage{enumerate}
\usetikzlibrary{math}


\newtheorem{theorem}[equation]{Theorem}
\newtheorem{corollary}[equation]{Corollary}
\newtheorem{prop}[equation]{Proposition}
\newtheorem{lemma}[equation]{Lemma}
\newtheorem{cor}[equation]{Corollary}

\theoremstyle{definition}
\newtheorem{definition}[equation]{Definition}

\newtheorem{example}[equation]{Example}
\newtheorem{question}[equation]{Question}
\newtheorem{remark}[equation]{Remark}

\newtheorem*{theorem*}{Main Theorem}

\newcommand{\llbr}{[\negthinspace[}
\newcommand{\rrbr}{]\negthinspace]}

\newcommand{\llpa}{(\negthinspace(}
\newcommand{\rrpa}{)\negthinspace)}

\newcommand{\A}{\ensuremath{\mathbb{A}}}
\newcommand{\LL}{\ensuremath{\mathbb{L}}}

\newcommand{\Z}{\ensuremath{\mathbb{Z}}}
\newcommand{\PP}{\ensuremath{\mathbb{P}}}
\newcommand{\Q}{\ensuremath{\mathbb{Q}}}

\newcommand{\C}{\ensuremath{\mathbb{C}}}

\newcommand{\Pro}{\ensuremath{\mathbb{P}}}

\newcommand{\cU}{\ensuremath{\mathscr{U}}}
\newcommand{\cV}{\ensuremath{\mathscr{V}}}
\newcommand{\cW}{\ensuremath{\mathscr{W}}}
\newcommand{\cX}{\ensuremath{\mathscr{X}}}
\newcommand{\cY}{\ensuremath{\mathscr{Y}}}
\newcommand{\cZ}{\ensuremath{\mathscr{Z}}}

\newcommand{\Spec}{\ensuremath{\mathrm{Spec}\,}}

\newcommand{\red}{\mathrm{red}}

\newcommand{\Var}{\mathrm{Var}}

\newcommand{\Gro}{\mathbf{K}}

\newcommand{\gro}{\mathbf{K}}
\newcommand{\Res}{\mathrm{Res}}

\newcommand{\cR}{\ensuremath{\mathcal{R}}}

\numberwithin{equation}{subsection}

\hyphenpenalty=6000 \tolerance=10000

\author{Luigi Lunardon}
\address{Imperial College,
Department of Mathematics, South Kensington Campus,
London SW72AZ, UK.} \email{l.lunardon16@imperial.ac.uk}

\author{Johannes Nicaise}
\address{Imperial College,
Department of Mathematics, South Kensington Campus,
London SW72AZ, UK, and KU Leuven, Department of Mathematics, Celestijnenlaan 200B, 3001 Heverlee, Belgium.} \email{j.nicaise@imperial.ac.uk}

\begin{document}
\title[Birational invariance of motivic zeta functions]{Birational invariance of motivic zeta functions of $K$-trivial varieties, and obstructions to smooth fillings}

\begin{abstract}
The motivic zeta function of a smooth and proper $\C\llpa t\rrpa$-variety $X$ with trivial canonical bundle is a rational function with coefficients in an appropriate Grothendieck ring of complex varieties, which measures how $X$ degenerates at $t=0$. In analogy with Igusa's monodromy conjecture for $p$-adic zeta functions of hypersurface singularities, we expect that the poles of the motivic zeta function of $X$ correspond to monodromy eigenvalues on the cohomology of $X$. In this paper, we prove that the motivic zeta function and the monodromy conjecture are preserved under birational equivalence, which extends the range of known cases. As a further application, we explain how the motivic zeta function acts as an obstruction to the existence of a smooth filling for $X$ at $t=0$.
\end{abstract}

\maketitle

\section{Introduction}\label{sec:intro}
In 1995, Kontsevich presented the theory of motivic integration at a famous lecture in Orsay. His main motivation was to prove that birationally equivalent complex Calabi-Yau varieties have the same Hodge numbers. This refined the analogous result for Betti numbers that was obtained by Batyrev by means of $p$-adic integration techniques \cite{batyrev}; both  theorems were motivated by mirror symmetry. Kontsevich deduced the equality of Hodge numbers by proving a stronger result, namely, that the varieties in question have the same class in a suitable Grothendieck ring.

 The main result in this paper is a similar statement for the motivic zeta function of a smooth and proper $\C\llpa t\rrpa$-variety $X$ with trivial canonical bundle \cite{mallorca}. This motivic zeta function is a rational function with coefficients in an appropriate Grothendieck ring of complex varieties. It measures how $X$ degenerates at $t=0$ and is a natural analog of Denef and Loeser's motivic zeta function for hypersurface singularities \cite{DLbarc} {\em via} the non-archimedean interpretation of the Denef-Loeser zeta function in \cite{NiSe}.
 
 We prove in Corollary \ref{cor:birat} that the motivic zeta function is a birational invariant of $X$.
To do so, we set up a more general theory that associates motivic zeta functions with germs of canonical forms on smooth $\C\llpa t\rrpa$-schemes. The definition of this generalized motivic zeta function will be stated in terms of strict normal crossings (snc) models of the germ (Definition \ref{def:snc}), and its independence of the choice of such a model will be deduced from the weak factorization theorem for quasi-excellent schemes in characteristic zero \cite{AbTe} (Theorem \ref{thm:WF}). The use of the weak factorization theorem to prove well-definedness of motivic invariants is certainly not a new idea: see for instance Section 7.8 in \cite{veys-intro}, \cite{GLM} or Appendix A in \cite{NiSh}. The additional complication in our set-up is the role of the canonical form and the fact that we allow birational modifications on $X$, and not only in the special fiber of some snc-model of $X$. The change of variables formula for motivic integrals \cite{DLgerms}, which is typically used to compute motivic invariants on suitable birational modifications (e.g., in Kontsevich's theorem), has not been developed for motivic integrals of canonical forms on $K$-varieties, which is our main reason to invoke weak factorization instead.To manage explicit calculations, we use the formalism of logarithmic geometry as in \cite{BuNi}.

The most important open problem around motivic zeta functions is the monodromy conjecture, which predicts a precise relation between poles of the motivic zeta function and monodromy eigenvalues, and thus expresses a subtle  connection between the geometry  and the cohomology of degenerations. 
  As a first application of our main result, we will prove that this property is invariant under birational equivalence; that is, if a smooth and proper $\C\llpa t\rrpa$-variety with trivial canonical bundle satisfies the monodromy conjecture, then the same holds for all such varieties in the same birational equivalence class (Theorem \ref{thm:moncon}). 

In our second application, we will explain how the motivic zeta function of $X$ serves as an obstruction to the existence of a smooth and proper model of $X$ in the category of algebraic spaces over $\C\llbr t\rrbr$, even up to birational modification of $X$ and finite base change (extracting roots of $t$). We demonstrate this obstruction in Theorem \ref{thm:CvS} on a recent example by Cynk and van Straten of a degenerating family of complex Calabi-Yau threefolds with trivial monodromy that has no smooth filling \cite{CvS}. We further illustrate it in Theorem \ref{thm:lefschetz} in the context of Voisin's result on the non-existence of smooth fillings for Lefschetz degenerations \cite{voisin}. Along the way, we show that the monodromy conjecture is satisfied in both cases.

\subsection*{Acknowledgement} We are grateful to Radu Laza for several discussions related to the content of Section \ref{sec:nofill}. Lunardon was supported by the EPSRC Centre for Doctoral Training ``London School of Geometry and Number Theory''. Nicaise was supported
by EPSRC grant EP/S025839/1, grants G079218N and G0B17121N of the
Fund for Scientific Research--Flanders, and long term structural funding (Methusalem grant) by the Flemish Government. A part of the results presented here were included in Lunardon's PhD thesis \cite{LunPhD} under Nicaise's supervision.

\subsection*{Notations}
We denote by $k$ a field of characteristic zero, and we set $R=k\llbr t\rrbr$ and $K=k\llpa t \rrpa$. We fix an algebraic closure $k^a$ of $k$, and we set 
$$K^a=\bigcup_{k',\,n}k'\llpa t^{1/n}\rrpa$$
where  $k'$ runs over the finite extensions of $k$ in $k^a$, and $n$ over the positive integers.  
 The field $K^a$ is an algebraic closure of $K$. Throughout Section \ref{sec:nofill}, we will assume that $k$ is algebraically closed; then $K^a$ is the field of Puiseux series over $k$.

For every positive integer $n$, we set $K(n)=k\llpa t^{1/n}\rrpa$ and $R(n)=k\llbr t^{1/n}\rrbr$. 
We denote by $\mu_n=\Spec k[\zeta]/(\zeta^n-1)$ the group scheme of $n$-th roots of unity over $k$. We let it act on $\Spec R(n)$ and $\Spec K(n)$ from the left; the action is given on affine coordinate rings by $$t^{1/n}\mapsto \zeta^{-1} t^{1/n}\otimes \zeta.$$
If $k$ contains all $n$-th roots of unity, then $K(n)$ is Galois over $K$ and the above action  identifies $\mu_n(k)$ with the Galois group $\mathrm{Gal}(K(n)/K)$.

  We set
$$\hat{\mu}=\lim_{\longleftarrow}\mu_n$$
where the positive integers $n$ are ordered by divisibility and the transition morphisms
$\mu_{mn}\to \mu_n$ are the $m$-th power maps. Thus $\hat{\mu}$ is the profinite group scheme of roots of unity over $k$. The profinite group $\hat{\mu}(k^a)$ is canonically isomorphic to the intertia subgroup $I_K$ of the absolute Galois group $\mathrm{Gal}(K^a/K)$.
 If $k\subset \C$, then  $\hat{\mu}(\C)$ is isomorphic to the profinite group $\widehat{\Z}$ {\em via} the morphism of topological groups that sends the topological generator $1$ of $\widehat{\Z}$ to the element $(\exp(2\pi i/n))_{n>0}$ of $\hat{\mu}(\C)$.

For every scheme  $\cX$ over $R$, we denote by $\cX_k=\cX\otimes_R k$ its special fiber, and by $\cX_K=\cX\otimes_R K$ its generic fiber.
 If $X$ is a smooth $K$-scheme of pure dimension $d$, then a volume form on $X$ is a nowhere vanishing differential form of degree $d$ on $X$.

\section{Birational invariance of motivic zeta functions}
\subsection{The equivariant Grothendieck ring of varieties}
Let $G$ be a profinite group scheme over $k$. We say that a quotient group scheme $H$ of $G$ is {\em admissible} if $H$ is finite over $k$ and the kernel of
$G(k^a)\to H(k^a)$ is an open subgroup of the profinite group $G(k^a)$.  For our purposes, the most important example is $G=\hat{\mu}$, the profinite group scheme of roots of unity over $k$; its admissible quotients are precisely the finite group schemes $\mu_n$ of $n$-th roots of unity over $k$.

 We now recall the definition of the Grothendieck ring $\gro^{G}(\Var_k)$ of $k$-varieties with $G$-action, which is due to Denef and Loeser (\S2.4 in  \cite{DLbarc}).
 As an abelian group, $\gro^{G}(\Var_k)$ is characterized by the following presentation:
\begin{itemize}
\item {\em Generators}: isomorphism classes of $k$-schemes $X$ of finite type endowed with a good $G$-action. Here ``good'' means that the action factors through an admissible quotient of $G$ and that we can cover $X$ by $G$-stable affine open subschemes (the latter condition is always satisfied when $X$ is quasi-projective). Isomorphism classes are taken with respect to $G$-equivariant isomorphisms.
\item {\em Relations}: we consider two types of relations.
\begin{enumerate}
\item {\em Scissor relations}: if $X$ is a $k$-scheme of finite type with a good $G$-action and $Y$ is a $G$-stable closed subscheme of $X$, then
$$[X]=[Y]+[X\setminus Y].$$
\item \label{it:reltriv} {\em Trivialization of affine actions}: let $X$ be a $k$-scheme of finite type with a good $G$-action, and let $A\to X$ be an affine bundle of rank $d$, endowed with an affine action of $G$ such that $A\to X$ is equivariant (in other words, $A\to X$ is a $G$-equivariant torsor whose translation space is a vector bundle of rank $d$ on $X$ with $G$-linear action). Then $[A]=[X\times_k \A^d_k]$, where the action on $\A^d_k$ is trivial.
\end{enumerate}
\end{itemize}
The group $\gro^{G}(\Var_k)$ has a unique ring structure such that $[X]\cdot [X']=[X\times_k X']$ for all $k$-schemes $X$, $X'$ of finite type  with good $G$-action, where $G$ acts 
on $X\times_k X'$ diagonally. The unit element in this ring is $1=[\Spec k]$.
 We write $\LL$ for the class of $\A^1_k$ (with the trivial $G$-action) in the ring $\gro^{G}(\Var_k)$. The localization $\gro^{G}(\Var_k)[\LL^{-1}]$ is denoted by $\mathcal{M}^{G}_k$.

 When $G$ is the trivial group, we write $\gro(\Var_k)$ and $\mathcal{M}_k$ instead of $\gro^{G}(\Var_k)$ and $\mathcal{M}^G_k$.
 If $G'\to G$ is a continuous morphism of profinite group schemes, then there is an obvious ring morphism
$$\Res^G_{G'}:\gro^{G}(\Var_k)\to \gro^{G'}(\Var_k)$$
obtained by restricting the $G$-action to $G'$. It localizes to a ring morphism
$$\Res^G_{G'}:\mathcal{M}^{G}_k\to \mathcal{M}^{G'}_k.$$

\subsection{The motivic zeta function of a degeneration of varieties with trivial canonical bundle}
Let $X$ be a smooth and proper $K$-scheme of finite type and pure dimension $d$. Assume that the canonical bundle of $X$ is trivial and let $\omega$ be a volume form on $X$. The motivic zeta function $Z_{X,\omega}(T)$ of
 the pair $(X,\omega)$ is an invariant that measures how $X$ and $\omega$ degenerate at $t=0$; we refer to \cite{mallorca} for an introduction.
 It is defined as a generating series of motivic integrals of $\omega$ on $X$ over the totally ramified extensions $K(n)$ of $K$:
\begin{equation}\label{eq:defzeta}
Z_{X,\omega}(T)=\sum_{n>0}\left(\int_{X\otimes_K K(n)}|\omega\otimes_K K(n)|\right)T^n \in \mathcal{M}_k^{\hat{\mu}}\llbr T \rrbr
\end{equation}
See Section 7.2 in \cite{BuNi} for the definition of the motivic integrals in the coefficients of this generating series. The motivic zeta function depends on the choice of the uniformizer $t$ in $K$, unless $k$ contains all roots of unity: then $K(n)$ is the unique totally ramified extension of degree $n$ of $K$, up to $K$-isomorphism.  

The expression \eqref{eq:defzeta}  can be computed explicitly on
 a strict normal crossings model for $X$ over $R$, by Theorem 7.2.1 in \cite{BuNi}. We will recall the formula in Theorem \ref{thm:snc} after setting up some notations.

\begin{definition}\label{def:snc0}
	Let $X$ be a smooth and proper $K$-scheme. An snc-model for $X$ is a proper flat $R$-scheme  $\cX$, endowed with an isomorphism $\cX_K\to X$, such that $\cX$ is regular and its special fiber $\cX_k$ is a strict normal crossings divisor (not necessarily reduced).
\end{definition}

 Such a model always exists, by Nagata's compactification theorem and Hironaka's embedded resolution of singularities for $R$-schemes of finite type. Let $\cX$ be an snc-model for $X$. We write $\cX_k=\sum_{i\in I}N_i E_i$ where $E_i,\,i\in I$ are the prime components of $\cX_k$ and the coefficients $N_i$ are their multiplicities.
 For every non-empty subset $J$ of $I$, we set
$$E_J=\bigcap_{j\in J}E_j,\quad E_J^o=E_J\setminus \left(\bigcup_{i\notin J}E_i\right).$$
The family $\{E_J^o\,|\,\emptyset \neq J\subset I\}$ is a partition of $\cX_k$ into locally closed subsets. We endow $E_J$ and $E_J^o$ with their induced reduced structures. We also set
$N_J=\gcd\{N_j\,|\,j\in J\}$. We denote by $\cX(N_J)$ the normalization of $\cX\otimes_R R(N_J)$, and we set $\widetilde{E}_J^o=E_J^o\times_{\cX}\cX(N_J)$.
 The action of the group scheme $\mu_{N_J}$ on $\Spec R(N_J)$ induces an action on $\cX(N_J)$, which restricts to an action on  $\widetilde{E}_J^o$. This makes $\widetilde{E}_J^o$ into a $\mu_{N_J}$-torsor over $E_J^o$ (see Section 2.3 in \cite{Ni-tameram} for a more explicit description of this torsor).

 We can view $\omega$ as a rational section of the logarithmic relative canonical bundle
 $$\omega_{\cX/R}(\cX_{k,\red}-\cX_k).$$ Then $\omega$ determines a Cartier divisor
 $$\mathrm{div}_{\cX}(\omega)=\sum_{i\in I}\nu_iE_i$$
 supported on the special fiber $\cX_k$. We call $\{(N_i,\nu_i)\,|\,i\in I\}$
 the set of numerical data associated with $(\cX,\omega)$.

\begin{theorem}\label{thm:snc}
We have 
\begin{equation}\label{eq:snc}
Z_{X,\omega}(T)=\sum_{\emptyset \neq J\subset I}[\widetilde{E}_J^o](\LL-1)^{|J|-1}\prod_{j\in J}\frac{\LL^{-\nu_j}T^{N_j}}{1-\LL^{-\nu_j}T^{N_j}}
\end{equation}
in $\mathcal{M}_k^{\hat{\mu}}\llbr T \rrbr$.
\end{theorem}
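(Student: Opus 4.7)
My strategy is to compute each coefficient $c_n := \int_{X \otimes_K K(n)} |\omega \otimes_K K(n)|$ of the generating series \eqref{eq:defzeta} on the snc-model $\cX$, and then assemble $\sum_{n\geq 1} c_n T^n$ into the closed form \eqref{eq:snc}. The geometric input is the log-smooth motivic integration formalism from [BuNi, \S7]: since $X$ is proper, the valuative criterion identifies $X(K(n)) = \cX(R(n))$, and the motivic integral is computed by stratifying arcs by where their closed point maps in $\cX_k$ and by their contact orders along the components of $\cX_k$.

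For fixed $n$, I would stratify $\cX(R(n))$ according to the locally closed stratum $E_J^o$ containing the image of the closed point. On a chart with coordinates $x_1, \ldots, x_d$ such that $E_j = \{x_j = 0\}$ for $j \in J$ and $t = u\prod_{j \in J} x_j^{N_j}$ with $u$ a unit, an $R(n)$-arc is an assignment $x_j \mapsto x_j(s) \in k\llbr s \rrbr$ (with $s = t^{1/n}$) satisfying $u(x(s)) \prod_j x_j(s)^{N_j} = s^n$. Setting $m_j = \ord_s x_j(s) \geq 1$, taking $s$-orders forces the constraint $\sum_{j \in J} N_j m_j = n$. The leading coefficients $a_j \in k^\times$ of $x_j(s)$ are subject to a single multiplicative relation with the value of $u$ at the basepoint, so they range over a $\G_m^{|J|-1}$-torsor; the $\mu_n$-action $s \mapsto \zeta s$ on $R(n)$ twists the $a_j$ by $\zeta^{m_j}$ and, after factoring through to $\mu_{N_J}$, identifies the residual data over $E_J^o$ with the $\mu_{N_J}$-torsor $\widetilde{E}_J^o$. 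Interpreting $\omega$ as a log form with divisor $\sum_i \nu_i E_i$, the local expression $\omega = (\text{unit}) \prod_j x_j^{\nu_j} \,dx_1 \wedge \cdots \wedge dx_d / (\text{log factors})$ contributes $\LL^{-\sum_{j\in J} \nu_j m_j}$ upon integration over the cylinder of arcs with contact multiplicities $(m_j)$, while the higher-order moduli of the arcs exactly cancel the ambient $\LL^{-d}$ in the measure.

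Collecting contributions, I expect
\begin{equation*}
c_n \;=\; \sum_{\emptyset \neq J \subset I} [\widetilde{E}_J^o]\,(\LL - 1)^{|J|-1}\! \sum_{\substack{(m_j)_{j\in J}\\ m_j \geq 1,\ \sum_j N_j m_j = n}} \prod_{j \in J} \LL^{-\nu_j m_j}.
\end{equation*}
Multiplying by $T^n$ and summing over $n \geq 1$ dissolves the constraint $\sum_j N_j m_j = n$ into an unconstrained product of geometric series,
$$\prod_{j \in J}\sum_{m\geq 1}\LL^{-\nu_j m}T^{N_j m} \;=\; \prod_{j \in J} \frac{\LL^{-\nu_j}T^{N_j}}{1 - \LL^{-\nu_j}T^{N_j}},$$
which reproduces the right-hand side of \eqref{eq:snc}.

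The main obstacle is the local measure computation in the middle paragraph, specifically the precise identification of the residual torsor with $\widetilde{E}_J^o$ equipped with the $\mu_{N_J}$-action dictated by the normalization of $\cX \otimes_R R(N_J)$. This is exactly what the log-smooth integration formalism of [BuNi, \S7] is designed to handle: it organizes the change of variables, the logarithmic twist $\cX_{k,\red} - \cX_k$ built into $\mathrm{div}_\cX(\omega)$ (which is what makes the exponent come out as $\nu_j$ rather than $\nu_j + 1$), and the equivariant Grothendieck class of the resulting torsor in a uniform package. Once one commits to that language, the combinatorial bookkeeping above is essentially mechanical; the delicate point is to verify that the $\hat\mu$-action obtained from $s \mapsto \zeta s$ agrees with the one on $\widetilde{E}_J^o$ coming from the $\mu_{N_J}$-cover, which is a compatibility of log-structures on $\cX$ and its base change.
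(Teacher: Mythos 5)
Your proposal is correct and, in substance, coincides with the paper's proof: the paper simply cites [BuNi, Theorem 7.2.1], and your three paragraphs are a faithful outline of the arc-stratification computation that underlies that theorem, down to the role of the $\G_m^{|J|-1}$-factor, the $\mu_{N_J}$-torsor $\widetilde{E}_J^o$, and the log-twist shifting the exponent from $\nu_j+1$ to $\nu_j$. Since you too defer the measure-theoretic and equivariance details to the log-smooth formalism of [BuNi, \S 7], the two arguments are the same proof at different levels of unpacking.
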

\begin{proof}
This is Theorem 7.2.1 in \cite{BuNi}.	
\end{proof}

 The motivic zeta function $Z_{X,\omega}(T)$ depends on the volume form $\omega$ only in a mild way: if $X$ is geometrically connected (which is the only case of interest in this article) then any two volume forms differ by a factor $\lambda$ in $K^{\times}$. It follows immediately from the definition of the motivic zeta function (or the explicit formula \eqref{eq:snc}) that
\begin{equation}\label{eq:scalar}
Z_{X,\lambda \omega}(T)=Z_{X,\omega}(\LL^{-\mathrm{ord}_t(\lambda)}T)
\end{equation}
where $\mathrm{ord}_t$ denotes the $t$-adic valuation on $K$.

We will need one further property of the motivic zeta function, namely, its behaviour under extensions of the base field $K$.
 For every element $Z(T)=\sum_{i=0}^{\infty} a_i T^i$ of  $\mathcal{M}_k^{\hat{\mu}}\llbr T \rrbr$, and every positive integer $m$, we denote by $\hat{\mu}(m)$ the kernel of the projection morphism $\hat{\mu}\to \mu_m$, and set
 $$Z^{(m)}(T)=\sum_{i=0}^{\infty}\mathrm{Res}^{\hat{\mu}}_{\hat{\mu}(m)}(a_{mi})T^i \quad \in \mathcal{M}_k^{\hat{\mu}(m)}\llbr T \rrbr.$$

\begin{prop}\label{prop:bc}
For every positive integer $m$, we have 
$$Z_{X\otimes_K K(m),\omega\otimes_K K(m)}(T)=Z^{(m)}_{X,\omega}(T).$$
\end{prop}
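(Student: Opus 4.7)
The plan is to verify the identity coefficient by coefficient in $T$. By definition of the zeta function, the coefficient of $T^n$ on the left-hand side is the motivic integral $b_n:=\int_{X\otimes_K K(mn)}|\omega\otimes_K K(mn)|$ computed via the motivic integration theory over $K(m)$ (using $K(m)(n)=K(mn)$), which naturally lies in $\mathcal{M}_k^{\hat{\mu}}$ under the identification of the profinite group of roots of unity for $K(m)$ (with uniformizer $t^{1/m}$) with $\hat{\mu}(m)\subset\hat{\mu}$. The coefficient of $T^n$ on the right-hand side is $\Res^{\hat{\mu}}_{\hat{\mu}(m)}(a_{mn})$, where $a_{mn}=\int_{X\otimes_K K(mn)}|\omega\otimes_K K(mn)|$ is the same integral computed via motivic integration over $K$. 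So the content of the proposition is that this intrinsic integral gives the same element of $\mathcal{M}_k$ whether computed over $K$ or over $K(m)$, with the $\hat{\mu}$-action from the $K$-side restricting, along $\hat{\mu}(m)\hookrightarrow\hat{\mu}$ (corresponding to $\Gal(K(mn)/K(m))\subset\Gal(K(mn)/K)$), to the $\hat{\mu}(m)$-action from the $K(m)$-side.

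I would establish this by using Theorem \ref{thm:snc} on both sides. Fix an snc-model $\cX$ for $X$ over $R$ with numerical data $\{(N_i,\nu_i):i\in I\}$. Extracting the coefficient of $T^{mn}$ from the explicit formula yields
\[
a_{mn}=\sum_{\emptyset\neq J\subset I}[\widetilde{E}^o_J](\LL-1)^{|J|-1}\sum_{\substack{(k_j)_{j\in J}\in\Z_{>0}^J\\ \sum_j N_jk_j=mn}}\prod_{j\in J}\LL^{-\nu_jk_j},
\]
and hence an explicit formula for $Z^{(m)}_{X,\omega}(T)$. For the left-hand side, I would build an snc-model $\cX^{(m)}$ for $X\otimes_K K(m)$ over $R(m)$ as a log resolution of the normalization of $\cX\otimes_R R(m)$, using the logarithmic formalism of \cite{BuNi}. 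A local analysis at each stratum $E^o_J$ of $\cX$ describes the preimage in $\cX^{(m)}$ as a union of strata whose combinatorics and torsor structure over $E^o_J$ are controlled by the divisibility relations between $\{N_j:j\in J\}$ and $m$; crucially, the resulting $\mu$-torsors coincide with the restriction of $\widetilde{E}^o_J\to E^o_J$ along the canonical inclusion $\hat{\mu}(m)\hookrightarrow\hat{\mu}$.

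Applying Theorem \ref{thm:snc} to $\cX^{(m)}$ then produces an expression for $Z_{X\otimes K(m),\omega\otimes K(m)}(T)$, and matching it with the formula for $Z^{(m)}_{X,\omega}(T)$ derived from $\cX$ reduces, for each $J$, to a combinatorial generating-function identity in the numerical data together with the aforementioned torsor identification. The main technical obstacle is producing the snc-model $\cX^{(m)}$ explicitly: the normalization of $\cX\otimes_R R(m)$ typically acquires cyclic quotient singularities at the strata where not all $N_j$ divide $m$, and their toric resolution must be tracked in tandem with the $\mu_{N_J}$-torsor structure of $\widetilde{E}^o_J$. Once this local picture is pinned down, the identity of zeta functions (with matching $\hat{\mu}$-actions) follows from the resulting toric bookkeeping.
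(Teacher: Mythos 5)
Your first paragraph correctly identifies the whole content of the proposition: after the identification $K(m)(n)=K(mn)$, the coefficient of $T^n$ on the left and the coefficient of $T^{mn}$ on the right are the \emph{same} motivic integral $\int_{X\otimes_K K(mn)}|\omega\otimes_K K(mn)|$ viewed relative to $K(m)$ and relative to $K$, and the only thing to check is that the $\hat{\mu}(m)$-action obtained by descending along $K(m)$ agrees with the restriction of the $\hat{\mu}$-action obtained by descending along $K$. That identification is immediate from the inclusion $\Gal(K(mn)/K(m))\subset\Gal(K(mn)/K)$, which under the conventions of the paper is exactly $\hat{\mu}(m)\hookrightarrow\hat{\mu}$. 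This is precisely the paper's argument: the statement follows directly from the defining formula \eqref{eq:defzeta}, because the motivic integrals in [BuNi], Section 7.2, are intrinsic to the $K(mn)$-variety and its volume form, and carry the Galois action by construction.

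Where you diverge from the paper is that you then do not trust this observation and instead propose to \emph{verify} it by computing both sides via Theorem \ref{thm:snc} on explicit snc-models. That route is logically available but much harder, and it would in fact be circular in spirit: the well-definedness of the right-hand side of \eqref{eq:snc} (independence of the snc-model) is itself a consequence of the intrinsic definition of the integrals, which is exactly the fact you would be re-deriving. Moreover, the normalization of $\cX\otimes_R R(m)$ does acquire cyclic quotient singularities along strata $E^o_J$ where not all $N_j$ divide $m$, and controlling the exceptional combinatorics and the $\mu$-torsor structure through a toric resolution, while doable, is a substantial computation that buys nothing here. I would cut paragraphs two and three entirely: once you have observed that the coefficient is the same integral and that the Galois actions match up via $\hat{\mu}(m)\hookrightarrow\hat{\mu}$, the proof is complete.
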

\begin{proof}
This follows immediately from the definition of the motivic zeta function in \eqref{eq:defzeta}. \end{proof}

\subsection{Snc-models for germs of canonical forms}
Let $F$ be a finitely generated extension of $K$ of transcendence degree $d$, and let $\omega$ be a non-zero element of $\Omega^d_{F/K}$. Let $X$ be a smooth and proper model for $F$ over $K$, that is, a connected smooth and proper $K$-scheme endowed with a  $K$-isomorphism from $F$ to the function field of $X$. Such a model always exists, by Hironaka's resolution of singularities. We view $\omega$ as a rational section of the canonical line bundle $\Omega^d_{X/K}$ on $X$, and denote the corresponding Cartier divisor on $X$ by $\mathrm{div}_X(\omega)$. 

\begin{definition}
We say that $\omega$ is {\em effective} if $\mathrm{div}_X(\omega)$ is effective for some smooth and proper model $X$ for $F$ over $K$.
\end{definition}

This definition does not depend on the choice of the  model $X$, by the birational invariance of the space $H^0(X,\Omega^d_{X/K})$. 
 From now on, we assume that $\omega$ is effective.
Let $\cX$ be a connected regular proper flat $R$-scheme, endowed with an isomorphism of
 $K$-algebras between $F$ and the function field of $\cX$. Then the generic fiber $\cX_K$ is a smooth and proper model for $F$ over $K$.
 We denote by $D$ the schematic closure in $\cX$ of the effective divisor $\mathrm{div}_{\cX_K}(\omega)$ on $\cX_K$.

\begin{definition}\label{def:snc}
 We say that $\cX$ is an {\em snc-model} for $\omega$ if the divisor $D+\cX_k$ on $\cX$ has strict normal crossings.
\end{definition}

Such an snc-model always exists, by Nagata's compactification theorem and Hironaka's embedded resolution of singularities for $R$-schemes of finite type. A morphism $\cY\to \cX$ of snc-models of $\omega$ is a dominant morphism of $R$-schemes such that the morphism of function fields $F(\cX)\to F(\cY)$ commutes with the given isomorphisms with $F$. In particular, the induced morphism $\cY_K\to \cX_K$ is birational. Note that the support of   $\mathrm{div}_{\cY_K}(\omega)$ is the union of the exceptional divisor of $\cY_K\to \cX_K$ with the strict transform of the support of $\mathrm{div}_{\cX_K}(\omega)$.

\begin{definition}
Let $\cX$ be an snc-model for $\omega$, and denote by $D$ the schematic closure of
$\mathrm{div}_{\cX_K}(\omega)$ in $\cX$.
 An {\em $\omega$-elementary blow-up} of $\cX$
 is a
  blow-up $\cX'\to \cX$ whose center is a connected regular closed subscheme $Z$ of
  $\cX$ that has strict normal crossings with $D+ \cX_k$.
\end{definition}

 The strict normal crossings condition means that, Zariski-locally around every point of $Z$, we can find an effective divisor $D_0$ on $\cX$ such that $D'=D_0+D+\cX_k$ has strict normal crossings and $Z$ is an intersection of irreducible components of $D'$.
  A straightforward local computation shows that the blow-up $\cX'$ is again an snc-model for $\omega$.

 \begin{theorem}\label{thm:WF}
Let $F$ be a finitely generated extension of $K$ of transcendence degree $d$, and let $\omega$ be an effective element of $\Omega^d_{F/K}$.
  Let $\mathcal{I}$ be an invariant of snc-models of $\omega$, that is,
 a map from the set of isomorphism classes of snc-models of $\omega$ to some set $A$. Assume that, for every snc-model $\cX$ of $\omega$ and
  every $\omega$-elementary blow-up $\cX'\to \cX$, we have $\mathcal{I}(\cX)=\mathcal{I}(\cX')$. Then the invariant $\mathcal{I}$ takes the same value on all snc-models of $\omega$.
 \end{theorem}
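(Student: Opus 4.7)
The plan is to deduce the statement from the weak factorization theorem of Abramovich--Temkin \cite{AbTe} for proper birational morphisms of regular, quasi-excellent $\Q$-schemes, together with Hironaka's embedded resolution of singularities applied over $R$.

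First, I would reduce to the case where one snc-model dominates the other. Given two snc-models $\cX$ and $\cX'$ of $\omega$, I would take the closure of the graph of the birational map $\cX\dashrightarrow \cX'$ inside $\cX\times_R\cX'$ and apply embedded resolution so that the total transform of $D_\cX+\cX_k+D_{\cX'}+\cX'_k$ becomes a strict normal crossings divisor. This produces a third snc-model of $\omega$ that dominates both $\cX$ and $\cX'$, reducing the problem to showing that $\mathcal{I}(\cZ)=\mathcal{I}(\cX)$ whenever there is a morphism of snc-models $\pi:\cZ\to\cX$.

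Next, I would apply the weak factorization theorem to $\pi$, taking as boundary divisors $D_\cX+\cX_k$ on $\cX$ and $D_\cZ+\cZ_k$ on $\cZ$ (both snc by hypothesis; note that the support of the total transform of the former on $\cZ$ is contained in the support of the latter, since every horizontal exceptional divisor of $\pi_K$ appears in $\mathrm{div}_{\cZ_K}(\omega)$ via the relative canonical bundle of $\pi_K$, and every vertical exceptional divisor lies in $\cZ_k$). This yields a chain
\[\cZ=\cZ_0\dashrightarrow \cZ_1\dashrightarrow\cdots\dashrightarrow \cZ_n=\cX\]
of proper birational maps, each of which is either a blow-up or the inverse of a blow-up along a regular closed center having strict normal crossings with the total transform of the boundary on the source. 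Each intermediate scheme $\cZ_i$ is automatically an snc-model of $\omega$ because $D_{\cZ_i}+\cZ_{i,k}$ is supported on the total transform of $D_\cX+\cX_k$, which is snc by construction. After splitting centers with several connected components into successive blow-ups along each component, each elementary step becomes an $\omega$-elementary blow-up in the sense of the paper, and the hypothesis on $\mathcal{I}$ yields $\mathcal{I}(\cZ_i)=\mathcal{I}(\cZ_{i+1})$ along the chain in either direction.

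The main obstacle I anticipate lies in this last step: matching the output of weak factorization to the precise definition of an $\omega$-elementary blow-up. One needs the centers to be \emph{connected} (handled by blowing up connected components successively, which remains $\omega$-elementary at each stage) and to have strict normal crossings with $D_{\cZ_i}+\cZ_{i,k}$ specifically, rather than with the \emph{a priori} larger boundary divisor chosen to feed into weak factorization. Both requirements are controlled by the observation that the relevant divisor on each $\cZ_i$ is supported on the total transform of $D_\cX+\cX_k$, which is snc throughout, so any regular center in snc position with the full total transform is automatically in snc position with $D_{\cZ_i}+\cZ_{i,k}$.
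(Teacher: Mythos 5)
Your overall strategy---dominate both models, then invoke weak factorization and track that all the steps are $\omega$-elementary---is the right one, and matches the paper's. But there is a genuine gap in the way you feed the data into the weak factorization theorem, and it is precisely the gap the paper's proof spends most of its effort closing.

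You propose to apply weak factorization to $\pi\colon\cZ\to\cX$ with boundary divisors $D_\cX+\cX_k$ on $\cX$ and $D_\cZ+\cZ_k$ on $\cZ$. The hypothesis of the theorem, however, is that the birational map restricts to an isomorphism between the complements of the two boundaries. This fails in general for your choice: take a prime divisor $E\subset\cZ_K$ exceptional for $\pi_K$. Then $E$ appears with positive multiplicity in $K_{\cZ_K/\cX_K}$, hence in $\mathrm{div}_{\cZ_K}(\omega)$, so $E\subset D_\cZ$---this part of your observation is correct. But $\pi_K(E)$ is a codimension $\geq 2$ subset of $\cX_K$ that need not be contained in $\mathrm{div}_{\cX_K}(\omega)$. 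When it is not, the fibers of $\pi_K$ over points of $\pi_K(E)\setminus D_\cX$ meet the boundary of $\cZ$ but lie over the complement of the boundary of $\cX$, so $\pi$ is not an isomorphism between complements and the theorem does not apply. In other words, $D_\cX+\cX_k$ need not contain the indeterminacy locus of $\cX\dashrightarrow\cX'$, and you cannot simply enlarge the boundary on $\cZ$ without also enlarging it on $\cX$.

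The paper handles exactly this point with two extra preliminary steps that you omit. First, one chooses a common open $U\subset\cX_K$ on which both $\omega$ is nonvanishing \emph{and} the birational map $\cX_K\dashrightarrow\cY_K$ is an isomorphism, and then applies Hironaka's embedded resolution (which proceeds by $\omega$-elementary blow-ups) to make $\cX\setminus U$ an snc divisor; similarly for $\cY$. This is what guarantees that the boundary divisor handed to weak factorization actually has support equal to the complement of the isomorphism locus, while the hypothesis on $\mathcal{I}$ still controls the invariant along these preparatory blow-ups. Second, the paper applies Raynaud--Gruson flattening (Corollary 5.7.12 of \cite{flattening}) to the graph, producing a $\cZ$ that is a genuine blow-up of \emph{both} $\cX$ and $\cY$ with center disjoint from $U$; this is needed because Theorem 1.2.1 of \cite{AbTe} is stated for admissible blow-ups, not for arbitrary proper birational morphisms, so resolving the closure of the graph is not by itself sufficient input. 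Once these two reductions are made, your verification that the intermediate models are snc-models of $\omega$ and that the factorization steps are $\omega$-elementary (using that the relevant divisors are subdivisors of the snc boundary the theorem produces, and splitting disconnected centers into connected pieces) is sound and mirrors the paper.
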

 \begin{proof}
Let $\cX$ and $\cY$ be two snc-models for $\omega$. Then we can find dense open subschemes $U$ and $V$ of $\cX_K$ and $\cY_K$, respectively, such that $\omega$ does not vanish anywhere on $U$ and $V$, and such that there exists an isomorphism of $K$-schemes $U\to V$ whose associated morphism of function fields $F(V)\to F(U)$ commutes with the given isomorphisms to $F$. Let $D$ be the schematic closure of $\mathrm{div}_X(\omega)$ in $\cX$. In Chapter 1, \S2, Theorem $I_2^{N,n}$ of \cite{HironakaI}, Hironaka established embedded resolution of singularities for the pair $(\cX,\cX\setminus U)$ with respect to the boundary divisor $\cX_k+D$ (note that by the conventions at the start of Chapter 1 in \cite{HironakaI}, the notion of {\em algebraic scheme} includes schemes of finite type over $R$). This implies that we can find a composite $f\colon \cX'\to \cX$ of finitely many $\omega$-elementary blow-ups such that $f$ is an isomorphism over $U$ and the complement of $f^{-1}(U)$ in $\cX'$ (endowed with its induced reduced structure) is a strict normal crossings divisor. The analogous property holds for $\cY$. Thus we may assume that the complement of $U$ in $\cX$ and the complement of $V$ in $\cY$ are strict normal crossings divisors.

 Applying Corollary 5.7.12 in \cite{flattening} to the graph of the birational map $\cX\dashrightarrow \cY$, we can find a blow-up $g\colon \cZ\to \cX$ whose center is disjoint from $U$ and such that the isomorphism $g^{-1}(U)\to V$ extends to a blow-up $g\colon \cZ\to \cY$.
 After a further blow-up of $\cZ$, we may also assume that $\cZ\setminus g^{-1}(U)$ is a strict normal crossings divisor on $\cZ$; in particular, $\cZ$ is an snc-model for $\omega$.
  Now it follows from the weak factorization theorem for admissible blow-ups of quasi-excellent schemes of characteristic zero (Theorem 1.2.1 in \cite{AbTe}) that $\cX$, $\cY$ and $\cZ$ can be connected by a chain of $\omega$-elementary blow-ups and blow-downs between snc-models of $\omega$ (the statement in \cite{AbTe} uses normal crossings boundaries instead of strict normal crossings boundaries, but an inspection of the proof shows that $D_{V_i}$ has strict normal crossings for every $i$ if this holds for $D_1$ and $D_2$). Consequently,  $\mathcal{I}(\cX)=\mathcal{I}(\cY)=\mathcal{I}(\cZ)$.
  \end{proof}

\subsection{Motivic zeta functions of germs of canonical forms}\label{ss:germs}
Let $F$ be a finitely generated extension of $K$ of transcendence degree $d$, and let $\omega$ be an effective element of $\Omega^d_{F/K}$. Let $\cX$ be an snc-model for $\omega$ and let $U$ be the maximal open subscheme of $\cX_K$ where $\omega$ does not vanish. Then $\cX\setminus U$ (with its induced reduced structure) is a strict normal crossings divisor on $\cX$; we denote its prime components by $E_i,\,i\in I$. We write $I_v$ for the set of indices $i\in I$ such that $E_i$ is vertical (that is, contained in $\cX_k$) and $I_h=I\setminus I_v$ for the set of indices $i\in I$ such that $E_i$ is horizontal. For every $i\in I$, we denote by $N_i$ the multiplicity of $\cX_k$ along $E_i$; thus $\cX_k=\sum_{i\in I_v}N_i E_i$ and $N_i=0$ for all $i$ in $I_h$. We view $\omega$ as a rational section of the logarithmic relative canonical bundle $\omega_{\cX/R}(\cX_{k,\red}-\cX_k)$ and we denote by
$\mathrm{div}_{\cX}(\omega)$ the associated divisor on $\cX$. Then we can write
$$\mathrm{div}_{\cX}(\omega)=\sum_{i\in I}\nu_i E_i$$ where $\nu_i>0$ for all $i$ in $I_h$.

 For every non-empty subset $J$ of $I$, we set
$$E_J=\bigcap_{j\in J}E_j,\quad E_J^o=E_J\setminus \left(\bigcup_{i\notin J}E_i\right).$$
The family $\{E_J^o\,|\,\emptyset \neq J\subset I\}$ is a partition of $\cX\setminus U$ into locally closed subsets. We endow $E_J$ and $E_J^o$ with their induced reduced structures.
 If $J$ contains an element of $I_v$, we also set
$N_J=\gcd\{N_j\,|\,j\in J\}$. We denote by $\cX(N_J)$ the normalization of $\cX\otimes_R R(N_J)$, and we set $\widetilde{E}_J^o=E_J^o\times_{\cX}\cX(N_J)$.
 The action of the group scheme $\mu_{N_J}$ on $\Spec R(N_J)$ induces an action on $\cX(N_J)$, which restricts to an action on  $\widetilde{E}_J^o$. This makes $\widetilde{E}_J^o$ into a $\mu_{N_J}$-torsor over $E_J^o$.

 In order to define the motivic zeta function of the pair $(\cX,\omega)$, we need to work in the localization  
 $$\mathcal{R}=\mathcal{M}_k^{\hat{\mu}}\llbr T \rrbr \left[ \frac{1}{1-\LL^{-m}} \right]_{m\in \Z_{>0}}$$
of $\mathcal{M}_k^{\hat{\mu}}\llbr T \rrbr$. 
   There is a unique morphism of $\mathcal{M}^{\hat{\mu}}_k\llbr T \rrbr$-algebras from $\cR$ to  $\widehat{\mathcal{M}}_k^{\hat{\mu}}\llbr T \rrbr$, where $\widehat{\mathcal{M}}_k^{\hat{\mu}}$ denotes the usual dimensional completion of $\mathcal{M}_k^{\hat{\mu}}$  (see Chapter 2, \S4.3 in \cite{CLNS}). The ring $\widehat{\mathcal{M}}_k^{\hat{\mu}}$ is  the standard value ring in the theory of motivic integration, but we prefer to work in $\cR$ to retain finer information: it is not known whether the morphisms $\mathcal{M}_k^{\hat{\mu}}\llbr T\rrbr \to \cR$ and $\cR\to \widehat{\mathcal{M}}_k^{\hat{\mu}}$ are injective.

\begin{definition}\label{def:zeta2}
We define the motivic zeta function of the pair $(\cX,\omega)$ by means of the formula
$$Z_{\cX,\omega}(T)= \sum_{J\subset I,\,J\cap I_v\neq \emptyset}[\widetilde{E}_J^o](\LL-1)^{|J|-1}\prod_{j\in J}\frac{\LL^{-\nu_j}T^{N_j}}{1-\LL^{-\nu_j}T^{N_j}}$$ in $\cR$.
\end{definition}
Note that, for all $i$ in $I$, we have $\nu_i>0$ whenever $N_i=0$, so that the expression in Definition \ref{def:zeta2} indeed defines an element in $\cR$. When $\omega$ is a volume form on $\cX_K$, then $N_i>0$ for all $i$ in $I$, so that $Z_{\cX,\omega}(T)$ is defined already in $\mathcal{M}^{\hat{\mu}}_k\llbr T\rrbr$; then it coincides with the motivic zeta function $Z_{\cX_K,\omega}(T)$, by Theorem \ref{thm:snc}. We will always explicitly indicate whether we consider this invariant in $\mathcal{M}^{\hat{\mu}}_k\llbr T\rrbr$ or in $\cR$.

 \begin{theorem}\label{thm:inv}
  Let $F$ be a finitely generated extension of $K$ of transcendence degree $d$, and let $\omega$ be an effective  element of $\Omega^d_{F/K}$. Let $\cX$ be an snc-model for $\omega$.
 Then the motivic zeta function $Z_{\cX,\omega}(T)$ in $\cR$ only depends on $\omega$, and not on the choice of the snc-model $\cX$.
 \end{theorem}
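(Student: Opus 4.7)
The plan is to apply Theorem \ref{thm:WF}: it suffices to show that $Z_{\cX,\omega}(T)=Z_{\cX',\omega}(T)$ in $\cR$ for every $\omega$-elementary blow-up $\pi\colon \cX'\to \cX$. Since $\pi$ restricts to a $\hat\mu$-equivariant isomorphism between the strata of $\cX'$ disjoint from the exceptional divisor $E_0$ and the corresponding strata of $\cX$ disjoint from the center $Z$, the problem reduces to matching the contributions of strata meeting $E_0$ with those meeting $Z$. This is a local calculation around $Z$.

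Working Zariski-locally around $Z$, I would assume that $Z$ is the transverse intersection of the components $E_j$ for $j$ in some subset $J\subset I$ with auxiliary smooth divisors $F_1,\ldots,F_c$ not contained in the support of $\mathrm{div}_\cX(\omega)$. Set $J_v=J\cap I_v$ and $r=|J|+c=\mathrm{codim}_\cX(Z)$. A standard blow-up computation for the logarithmic relative canonical bundle determines the numerical data of $\cX'$: the strict transforms preserve $(N_i,\nu_i)$ for $i\in I$, and for the exceptional divisor $E_0$ one finds
\begin{equation*}
N_{E_0}=\sum_{j\in J_v}N_j,\qquad \nu_{E_0}=\sum_{j\in J}\nu_j+r-\max(|J_v|,1).
\end{equation*}
Moreover, $E_0$ is the projectivization of the normal bundle $\bigoplus_{j\in J}N_{E_j/\cX}|_Z\oplus\bigoplus_{s=1}^{c}N_{F_s/\cX}|_Z$ over $Z$, and its incidence with the strict transforms of the $E_j$ and $F_s$ cuts it into a stratification by torus torsors over $Z$, whose classes in $\gro^{\hat\mu}(\Var_k)$ take the form $(\LL-1)^{d}$ times the class of an appropriate torsor over $Z$. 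Substituting these classes into Definition \ref{def:zeta2} and extracting the common factor coming from the torsors over $Z$, the required identity collapses to a purely formal equality of rational functions in the variables $u_j=\LL^{-\nu_j}T^{N_j}$ (for $j\in J$) and in $\LL^{-1}$, which can be verified by a direct geometric-series manipulation.

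The hardest step is the $\hat\mu$-equivariant bookkeeping for the new strata in $E_0$: one must verify that, after passing to the normalization $\cX'(N_{E_0})$, the torsor over each stratum of $\cX'$ meeting $E_0$ factors through the pullback of an appropriate torsor over $Z$, extended by a trivial equivariant factor along the toric directions. Logarithmic coordinates in the spirit of \cite{BuNi} make this decomposition explicit and allow the computation to be carried out stratum by stratum. Finally, working in $\cR$ rather than in $\mathcal{M}^{\hat\mu}_k\llbr T \rrbr$ is essential only when $J$ contains horizontal indices, since in that case denominators of the form $1-\LL^{-\nu_j}$ appear; $\cR$ is precisely the localization in which these are inverted.
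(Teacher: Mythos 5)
Your proposal is correct in outline and follows the same top-level strategy as the paper (reduce via Theorem~\ref{thm:WF} to a single $\omega$-elementary blow-up $\pi\colon\cX'\to\cX$, then compare contributions of strata over $Z$ with strata in the exceptional divisor $E_0$), and your numerical data
$$N_{E_0}=\textstyle\sum_{j\in J_v}N_j,\qquad \nu_{E_0}=\textstyle\sum_{j\in J}\nu_j+r-\max(|J_v|,1)$$
are exactly right for the logarithmic relative canonical $\omega_{\cX/R}(\cX_{k,\red}-\cX_k)$. Where you diverge is in the middle: the paper does not verify the blow-up identity ``by hand.'' Instead, after a careful inclusion--exclusion over a finite open cover to localize around $Z$, it endows $\cU$ with the divisorial log structure coming from $D'=D_0+D+\cX_k$, observes that $\cU'\to\cU$ is the log modification attached to a proper subdivision of the Kato fan, rewrites $Z_{\cU,\omega}(T)$ as the log-smooth lattice-point generating series of Theorem~5.3.1 of \cite{BuNi}, and then \emph{cites} Step~2 of that proof for invariance under subdivision. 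Your plan essentially re-derives that Step~2 from scratch: stratify $E_0$ as a projectivized normal bundle over (the strata of) $Z$, factor the torsor classes out, and prove a formal rational-function identity in the variables $u_j=\LL^{-\nu_j}T^{N_j}$. That identity is genuine and checkable by geometric-series manipulation, and your instinct that $\cR$ is only needed when horizontal denominators $1-\LL^{-\nu_j}$ appear is the right one (the paper arrives at the same conclusion via its analog of Lemma~5.1.1 in \cite{BuNi}). The two things you should not gloss over, though, are precisely the two your sketch defers: (i) the strata of $E_0$ are torus torsors over the \emph{strata} $Z\cap E_{J'}^o$, not over $Z$ itself, and their contributions have to be matched stratum-by-stratum against the contributions of $Z\cap E_{J'}^o$ to $Z_{\cX,\omega}(T)$, which is combinatorially heavier than a single formal identity; and (ii) the $\hat\mu$-equivariant identification of the torsor $\widetilde{E}_{J''}^o$ in $\cX'(N_{J''})$ with the pullback of the corresponding torsor over $Z$ along a toric-trivial factor is exactly what the normalization bookkeeping in \cite{BuNi} (or \cite{Ni-tameram}) handles and is the step most likely to introduce sign or index errors if done by ad hoc coordinates. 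So your route is a valid and somewhat more elementary alternative, but it trades the paper's one citation for a non-trivial chunk of explicit toric/$\hat\mu$-bookkeeping that your sketch presently only names rather than carries out.
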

 \begin{proof}
This type of statement is typically proved by means of a suitable change of variables formula for motivic integrals, but in our set-up (canonical forms on $K$-varieties), such a formula has not been worked out in the literature. Instead, we invoke weak factorization in the form of Theorem \ref{thm:WF}.

 By Theorem \ref{thm:WF}, it suffices to show that $Z_{\cX,\omega}(T)$ remains invariant under $\omega$-elementary blow-ups of $\cX$. Let $Z$ be a connected regular closed subscheme of $\cX$ such that $Z$ has strict normal crossings with the divisor $\sum_{i\in I}E_i$, and let $\cX' \to \cX$ be the blow-up at $Z$. We must prove that $Z_{\cX,\omega}(T)=Z_{\cX',\omega}(T)$.

   For every open subscheme $\cU$ of $\cX$, we set
   $$Z_{\cU,\omega}(T)= \sum_{J\subset I,\,J\cap I_v\neq \emptyset}[\widetilde{E}_J^o\times_{E_J^o}(E_J^o \cap \cU)](\LL-1)^{|J|-1}\prod_{j\in J}\frac{\LL^{-\nu_j}T^{N_j}}{1-\LL^{-\nu_j}T^{N_j}}$$ in $\cR$.
 Then the scissor relations in the Grothendieck ring of varieties immediately imply that, for every finite cover $\{\cU_{\alpha}\,|\,\alpha\in A\}$ of $\cX$ by non-empty open subschemes $\cU_{\alpha}$, we have
 $$Z_{\cX,\omega}(T)=\sum_{\emptyset \neq B\subset A}(-1)^{|B|-1}Z_{\cU_B,\omega}(T),$$ 
 where $\cU_B=\cap_{\beta\in B}\cU_{\beta}$. 
 The analogous property holds for $\cX'$. Therefore, it suffices to prove that we can cover $\cX$ by open subschemes $\cU$ such that
 $Z_{\cV,\omega}(T)=Z_{\cV',\omega}(T)$ for every open subscheme $\cV$ of $\cU$, where $\cV'\to \cV$ is the blow-up at $Z\cap\cV$.
 
 Locally around each point of $Z$, we can find an open subscheme $\cU$ of $\cX$ and a divisor $D_0$
 on $\cU$ such that $D'=D_0+\sum_{i\in I}(E_i\cap \cU)$ has strict normal crossings and $Z\cap \cU$
 is an intersection of irreducible components of $D$. This property then also holds on every open subscheme $\cV$ of $\cU$.
 Let $\cU'\to \cU$ be the blow-up at $Z\cap\cU$. We will show that $Z_{\cU,\omega}(T)=Z_{\cU',\omega}(T)$ by means of the logarithmic techniques in \cite{BuNi}; we refer to Section 3 of \cite{BuNi} for the basic notions from logarithmic geometry that we need.
 
 We endow $\cU$ with the divisorial logarithmic structure induced by the divisor $D'$. Then $\cU$ is log smooth over $\Spec R$ with its standard log structure, by Proposition 3.6.1 in \cite{BuNi}, and $Z\cap \cU$ is the closure of a stratum in the logarithmic stratification of $\cU$.  The blow-up $\cU'\to \cU$ is the log modification associated with a proper subdivision of the fan of $\cU$.
 
 First, assume that $\omega$ does not vanish on $\cU_K$, and that $\cU'_K\to \cU_K$ is an isomorphism.
  By the explicit description of the characteristic monoids in Example 4.1.1 in \cite{BuNi}, our definition of $Z_{\cU,\omega}(T)$ coincides with the right hand side of equation (3) in Theorem 5.3.1 of \cite{BuNi}, applied to the log scheme $\cU$ (see also
 Theorem 7.2.1 in \cite{BuNi} for a discussion of the $\hat{\mu}$-action).
 It then follows from Step 2 of the proof of Theorem 5.3.1 in \cite{BuNi} that $Z_{\cU,\omega}(T)=Z_{\cU',\omega}(T)$.
 
  We will now explain why the assumption that  $\omega$ does not vanish on $\cU_K$ and  $\cU'_K\to \cU_K$ is an isomorphism can be omitted, provided that we work in the localization  $\cR$ of $\mathcal{M}_k^{\hat{\mu}}\llbr T \rrbr$. We apply the formulas (2) and (3) in the statement of Theorem 5.3.1 in \cite{BuNi} to the log scheme $\cU$ and the canonical form induced by $\omega$ on $\cU_K$.
  Expression (2) 
  can be viewed as en element in $\cR$ by the obvious analog of Lemma 5.1.1 in \cite{BuNi}. Consequently, the logarithmic expression in (3) also defines an element in $\cR$, and the explicit description of the characteristic monoids in Example 4.1.1 in \cite{BuNi} again implies that this element is equal to $Z_{\cU,\omega}(T)$. By the same argument as in Step 2 of the proof of Theorem 5.3.1 in \cite{BuNi}, the logarithmic expression for $Z_{\cU,\omega}(T)$  implies that it is invariant under the log modification $\cU'\to \cU$, even without the assumption that $\cU'_K\to \cU_K$ is an isomorphism. Therefore, $Z_{\cU,\omega}(T)=Z_{\cU',\omega}(T)$.
\end{proof}

In view of Theorem \ref{thm:inv}, the following definition is unambiguous.
\begin{definition}\label{def:germ}
Let $F$ be a finitely generated extension of $K$ of transcendence degree $d$, and let $\omega$ be an effective element of $\Omega^d_{F/K}$. We define the motivic zeta function $Z_{\omega}(T)$ by
$$Z_{\omega}(T)=Z_{\cX,\omega}(T) \in \cR$$
where $\cX$ is any snc-model for $\omega$.
\end{definition}

\begin{prop}\label{prop:Ktriv}
Let $X$ be a geometrically connected smooth and proper $K$-scheme with trivial canonical bundle, and let $\omega$ be a volume form on $X$. Then $Z_{\omega}(T)$ is the image of
$Z_{X,\omega}(T)$ in the ring $\cR$.
\end{prop}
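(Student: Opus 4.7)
The plan is to exhibit an snc-model of $X$ in the sense of Definition \ref{def:snc0} that is simultaneously an snc-model of $\omega$ in the sense of Definition \ref{def:snc}, and then observe that the formulas in Theorem \ref{thm:snc} and Definition \ref{def:zeta2} coincide term by term.

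First I would note that since $\omega$ is a volume form on the smooth proper $K$-scheme $X$, the divisor $\mathrm{div}_X(\omega)$ is zero; in particular $\omega$, viewed as an element of $\Omega^d_{F/K}$ for $F$ the function field of $X$, is effective. Let $\cX$ be an snc-model for $X$ as in Definition \ref{def:snc0}; such a model exists by the discussion following that definition. The schematic closure $D$ in $\cX$ of the zero divisor $\mathrm{div}_{\cX_K}(\omega)$ is empty, so $D+\cX_k=\cX_k$ is a strict normal crossings divisor. Hence $\cX$ is also an snc-model for $\omega$ in the sense of Definition \ref{def:snc}, and Theorem \ref{thm:inv} guarantees that $Z_{\omega}(T)=Z_{\cX,\omega}(T)$ in $\cR$.

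Next I would identify the data in Section \ref{ss:germs} attached to $\cX$. Because $\omega$ is nowhere vanishing on $\cX_K$, the maximal open subscheme $U$ of $\cX_K$ on which $\omega$ does not vanish equals $\cX_K$ itself. Therefore $\cX\setminus U=\cX_k$, and the prime components $E_i$, $i\in I$, of this divisor are precisely the prime components of the special fiber $\cX_k$ appearing in the setup of Theorem \ref{thm:snc}. In particular every $E_i$ is vertical, so $I_h=\emptyset$ and $I_v=I$; the multiplicities $N_i$ and the integers $\nu_i$ arising from viewing $\omega$ as a rational section of $\omega_{\cX/R}(\cX_{k,\red}-\cX_k)$, as well as the strata $E_J^o$ and their $\mu_{N_J}$-covers $\widetilde{E}_J^o$, agree in both setups by construction.

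Finally I would compare the two formulas. The condition $J\cap I_v\neq\emptyset$ in Definition \ref{def:zeta2} reduces to $J\neq\emptyset$, so
\[
Z_{\cX,\omega}(T)=\sum_{\emptyset\neq J\subset I}[\widetilde{E}_J^o]\,(\LL-1)^{|J|-1}\prod_{j\in J}\frac{\LL^{-\nu_j}T^{N_j}}{1-\LL^{-\nu_j}T^{N_j}},
\]
which is exactly the right hand side of \eqref{eq:snc}. Since every $N_j$ is positive, this expression already lies in $\mathcal{M}^{\hat{\mu}}_k\llbr T\rrbr$, and its image under the natural map $\mathcal{M}^{\hat{\mu}}_k\llbr T\rrbr\to\cR$ is $Z_\omega(T)$. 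By Theorem \ref{thm:snc}, this same expression equals $Z_{X,\omega}(T)$ in $\mathcal{M}^{\hat{\mu}}_k\llbr T\rrbr$, which proves the proposition.

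There is no genuine obstacle here: the only thing to check carefully is that a Definition \ref{def:snc0}-snc model for $X$ automatically qualifies as a Definition \ref{def:snc}-snc model for the volume form $\omega$, and that the two sets of combinatorial data (indices, multiplicities, strata and torsors) are literally the same objects in both formulations. Once this bookkeeping is in place, the identity of the generating series is immediate from the two explicit formulas.
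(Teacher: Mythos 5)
Your argument is correct and follows the same route the paper takes: the paper's proof consists of the single observation that any snc-model of $X$ is automatically an snc-model of $\omega$, after which formulas \eqref{eq:defzeta} and Definition \ref{def:zeta2} agree. You have simply spelled out the bookkeeping (emptiness of $D$, $I_h=\emptyset$, matching of strata and numerical data, comparison via Theorem \ref{thm:snc}) that the paper leaves implicit.
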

\begin{proof}
This follows at once from the formula \eqref{eq:defzeta} for $Z_{X,\omega}(T)$ and the definition of $Z_{\omega}(T)$, because any snc-model of $X$ is also an snc-model of $\omega$.
\end{proof}
\begin{cor}[Birational invariance]\label{cor:birat}
Let $X$ and $X'$ be connected smooth and proper $K$-schemes with trivial canonical bundles, and let  $f\colon X'\dashrightarrow X$ be a birational map. Let $\omega$ be a volume form on $X$, and let $\omega'$ be the unique volume form on $X'$ such that $f^*\omega=\omega'$ on the locus where $f$ is defined. Then
$Z_{X,\omega}(T)=Z_{X',\omega'}(T)$ in the ring $\cR$.
\end{cor}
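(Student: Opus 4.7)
The proof plan is essentially to note that Corollary \ref{cor:birat} is a direct deduction from Proposition \ref{prop:Ktriv} combined with Theorem \ref{thm:inv}; all the birational content has already been packaged into the well-definedness of the germ-level zeta function $Z_\omega(T)$.

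Concretely, I would first observe that a birational map $f\colon X'\dashrightarrow X$ between smooth proper $K$-schemes identifies their function fields over $K$; call the common function field $F$. Under this identification, the hypothesis $f^*\omega=\omega'$ says that $\omega$ and $\omega'$ are literally the same element of $\Omega^d_{F/K}$. Next I would verify that this common element $\omega$ is \emph{effective} in the sense required for Definition \ref{def:germ}: indeed, $\omega$ is a volume form on the smooth proper model $X$, so $\mathrm{div}_X(\omega)=0$, which is trivially effective. Hence the germ-level zeta function $Z_\omega(T)\in\cR$ of Definition \ref{def:germ} is defined, and its value is independent of the chosen snc-model by Theorem \ref{thm:inv}.

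The conclusion is then immediate. Applying Proposition \ref{prop:Ktriv} to the pair $(X,\omega)$ gives that $Z_\omega(T)$ coincides with the image of $Z_{X,\omega}(T)$ in $\cR$, and applying the same proposition to $(X',\omega')$ gives that $Z_\omega(T)$ coincides with the image of $Z_{X',\omega'}(T)$ in $\cR$. Comparing these two equalities yields
\[
Z_{X,\omega}(T)\;=\;Z_\omega(T)\;=\;Z_{X',\omega'}(T)\qquad \text{in } \cR,
\]
which is the statement of the corollary.

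There is essentially no obstacle here, since the real work has already been done: the nontrivial input is Theorem \ref{thm:inv} (which uses weak factorization to compare snc-models of the same germ, now allowing birational modifications of the generic fiber), together with the compatibility statement of Proposition \ref{prop:Ktriv}. The only small point to be careful about is confirming that the snc-model framework of Definition \ref{def:snc} does apply even though $X$ and $X'$ are not isomorphic over $K$; this is fine precisely because the definition of snc-model is attached to the germ $\omega\in\Omega^d_{F/K}$ rather than to any specific $K$-model, and any snc-model over $R$ of $X$ (respectively $X'$) in the sense of Definition \ref{def:snc0} is automatically an snc-model of $\omega$ in the sense of Definition \ref{def:snc}, because the horizontal divisor $D$ is empty when $\omega$ is a volume form.
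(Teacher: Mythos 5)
Your argument is exactly the paper's proof, merely spelled out in more detail: both reduce the corollary to Proposition \ref{prop:Ktriv} applied to each of $(X,\omega)$ and $(X',\omega')$, using that the birational map identifies the two volume forms as the same effective element of $\Omega^d_{F/K}$, and then comparing through the germ-level invariant $Z_\omega(T)$ from Definition \ref{def:germ} (whose well-definedness is Theorem \ref{thm:inv}). No gaps; this matches the intended proof.
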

\begin{proof}
We have  $Z_{X,\omega}(T)=Z_{\omega}(T)=Z_{X',\omega'}(T)$ in $\cR$.
\end{proof}
It is plausible that the equality in Corollary \ref{cor:birat} holds already in $\mathcal{M}_k^{\hat{\mu}}\llbr T \rrbr$, but this result seems out of reach with existing techniques. Note that, already in the setting of Kontsevich's theorem, it is unknown whether birational smooth proper  $k$-varieties with trivial canonical bundles have the same class in $\mathcal{M}_k$.

\section{Birational invariance of the monodromy property}
 The most important open problem about motivic zeta functions is the {\em monodromy conjecture}, which predicts a precise relation between poles of the zeta function and monodromy eigenvalues. It goes back to a similar conjecture of Igusa's on local zeta functions of $p$-adic polynomials (see Section 2.3 in \cite{denef-bourbaki}). The motivic formulation for hypersurface singularities is due to Denef and Loeser, and the analogous problem for motivic zeta functions of $K$-varieties with trivial canonical bundle was stated  by Halle and the second-named author in Question 2.7 in \cite{HaNi}; see also Section 2.3 in \cite{HaNi-CY}. We will give the precise statement in Section \ref{ss:monprop}, after some technical preparations in Sections \ref{ss:eigenv} and \ref{ss:poles} that will also be used for the applications in Section \ref{sec:nofill}.

\subsection{Monodromy eigenvalues}\label{ss:eigenv}
 Let $\sigma$ be an element of the inertia subgroup $I_K\cong \hat{\mu}(k^a)$ of $\mathrm{Gal}(K^a/K)$, and let $\ell$ be a prime number. For every smooth and proper $K$-scheme $Y$  and every integer $m\geq 0$, we denote by
 $$\Phi_{Y,m,\sigma}(u)=\det\left(u\cdot \mathrm{Id}-\sigma \,|\, H^m(Y\times_K K^a,\Q_\ell)\right)$$
 the characteristic polynomial of the action of $\sigma$ on the $\ell$-adic cohomology space $H^m(Y\times_K K^a,\Q_\ell)$.

\begin{prop}\label{prop:indepell}
Let $Y$ be a smooth and proper $K$-scheme.
For every $m\geq 0$, the characteristic polynomial $\Phi_{Y,m,\sigma}(u)$ is a product of cyclotomic polynomials in $\Z[u]$. It is independent of the choice of $\ell$. If we assume that $\sigma$ is a topological generator of $I_K$, then it is also independent of the choice of $\sigma$.
\end{prop}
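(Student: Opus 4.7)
The plan is to obtain quasi-unipotence of the inertia action by reducing to the complex-analytic setting, and to derive integrality, $\ell$-independence, and independence of the topological generator from the symmetry properties of cyclotomic polynomials.

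First, I would reduce to the case $k = \C$ by spreading out. The scheme $Y$ is defined over some finitely generated subfield $K_0$ of $K$ containing $t$, and $K_0/\Q(t)$ is a finitely generated field extension, so $K_0$ admits a $\Q(t)$-embedding into $\C\llpa t\rrpa$ (which has infinite transcendence degree over $\Q(t)$). Proper and smooth base change identifies $H^m(Y \times_K K^a, \Q_\ell)$ with $H^m(Y' \times_{\C\llpa t\rrpa} \overline{\C\llpa t\rrpa}, \Q_\ell)$ for the resulting smooth proper $\C\llpa t\rrpa$-scheme $Y'$, compatibly with the inertia actions, via a continuous surjection $I_K \twoheadrightarrow I_{\C\llpa t\rrpa}$ that sends any topological generator to a topological generator. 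Algebraizing $Y'$ to a smooth proper family $\mathcal{Y}$ over a punctured disk and applying Artin's comparison theorem, the action of a topological generator $\sigma$ of $I_{\C\llpa t\rrpa} \cong \widehat{\Z}$ corresponds, up to a $\widehat{\Z}^\times$-twist, to the geometric monodromy on the rational singular cohomology $H^m(\mathcal{Y}_{t_0}, \Q)$ of a smooth nearby fiber.

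This reduction immediately yields that $\Phi_{Y,m,\sigma}(u)$ lies in $\Q[u]$ and is independent of $\ell$. By the local monodromy theorem of Landman, the geometric monodromy is quasi-unipotent on $H^m(\mathcal{Y}_{t_0}, \Q)$, so all its eigenvalues are roots of unity. A monic polynomial in $\Q[u]$ whose roots are all roots of unity must be a product $\prod_d \Phi_d(u)^{e_d}$ of cyclotomic polynomials, in particular an element of $\Z[u]$; this establishes the first two assertions.

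For the third assertion, any two topological generators of $I_K \cong \widehat{\Z}$ are related by $\sigma' = \sigma^a$ for some $a \in \widehat{\Z}^\times$. The eigenvalues of $\sigma^a$ are the $a$-th powers of those of $\sigma$; since for each $d$ the residue of $a$ modulo $d$ is a unit in $\Z/d\Z$, the map $\zeta \mapsto \zeta^a$ permutes the primitive $d$-th roots of unity, so each factor $\Phi_d(u)^{e_d}$ of the characteristic polynomial is preserved. Hence $\Phi_{Y,m,\sigma'} = \Phi_{Y,m,\sigma}$. The main obstacle in this plan is the careful bookkeeping in the reduction to $k = \C$: one must verify that a topological generator of $I_K$ corresponds, under the successive identifications (base change, algebraization, comparison with singular cohomology), to a generator of the analytic monodromy group up to a unit in $\widehat{\Z}^\times$, relying on the third step to absorb that unit harmlessly.
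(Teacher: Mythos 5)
Your proposal reaches the same conclusion but by a genuinely different route. Where the paper invokes Berkovich's construction of a canonical $\Q$-structure $H^m(Y\times_K K^a,\Q)$ with quasi-unipotent operator $\Pi$ (which packages the algebraization, Artin comparison, and quasi-unipotence in one citation), you carry out that reduction by hand: spread $Y$ out over a finitely generated subfield, embed into $\C\llpa t\rrpa$, algebraize to a family over a punctured disk, compare with singular cohomology, and invoke Landman's local monodromy theorem. This is perfectly valid, morally the content of Berkovich's result, and has the advantage of being self-contained and classical; the cost is that the identifications of inertia groups and the algebraization step need more care than you give them (e.g.\ the ``continuous surjection $I_K\twoheadrightarrow I_{\C\llpa t\rrpa}$'' is really an isomorphism of tame inertia groups, and the descent to a curve over $\C$ deserves a sentence). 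For the third assertion your argument is actually cleaner than the paper's: you observe directly that two topological generators differ by $a\in\widehat{\Z}^\times$ and that $\zeta\mapsto\zeta^{a}$ permutes the primitive $d$-th roots of unity for each $d$, hence fixes each cyclotomic factor. The paper instead approximates $\sigma$ by integer powers of $\tau$ and uses local constancy of $\sigma\mapsto\Phi_{Y,m,\sigma}$; both work, but yours is more transparent.

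There is one genuine gap: the first two assertions of the proposition are claimed for \emph{every} $\sigma\in I_K$, not only for topological generators, and your argument only treats topological generators. The paper closes this by a continuity argument (the map $\sigma\mapsto\Phi_{Y,m,\sigma}$ is continuous on $I_K$, takes values in the discrete set $\Z[u]$ on the dense subset $\{\tau^a : a\in\Z\}$, hence is locally constant with values in $\Z[u]$ everywhere). You could also fill it within your framework by writing an arbitrary $\sigma$ as $\tau^a$ with $a\in\widehat{\Z}$ (not necessarily a unit); the eigenvalues of $\sigma$ are then the $a$-th powers of the roots of unity that are eigenvalues of $\tau$, which are again roots of unity, and the resulting multiset is Galois-stable because the $a$-th power map commutes with the Galois action on roots of unity. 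Either way the gap is minor and fixable, but as written the proof does not cover all of what the proposition asserts.
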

\begin{proof}
  By the Lefschetz principle, we may assume that $k=\C$. In this case, the inertia group $I_K\cong \hat{\mu}(\C)$ has a canonical topological generator, namely, the element $\tau=(\exp(2\pi i/n))_{n>0}$.
 Berkovich proves in \cite{berk-Z} that there exists, for every $m\geq 0$, a canonical $\Q$-vector space $H^m(Y\times_K K^a,\Q)$ with a quasi-unipotent operator $\Pi$, together with isomorphisms
  $$H^m(Y\times_K K^a,\Q)\otimes_{\Q}\Q_{\ell}\to H^m(Y\times_K K^a,\Q_\ell)$$
  for all primes $\ell$ that identify the actions of $\Pi$ and $\tau$.
  Consequently, for every $m\geq 0$, we have
  $$\Phi_{Y,m,\tau}(u)=\det\left(u\cdot \mathrm{Id}-\Pi \,|\, H^m(Y\times_K K^a,\Q)\right),$$ and this is a product of cyclotomic polynomials in $\Z[u]$ that does not depend on $\ell$. 

 By the continuity of the action of $I_K$ on the $\ell$-adic cohomology of $Y$, the map
 $$\Phi_{Y,m}\colon I_K\to \Q_\ell[u],\ \sigma\mapsto \Phi_{Y,m,\sigma}(u)$$ is continuous for every $m\geq 0$.
  On the dense subset $\{\tau^a\,|\,a\in \Z\}$ of $I_K$, it takes values in the discrete closed subring $\Z[u]$ of $\Q_{\ell}[u]$, and it is independent of $\ell$. It follows that $\Phi_{Y,m}$ is locally constant and independent of $\ell$, and that $\Phi_{Y,m,\sigma}(u)$ is a product of cyclotomic polynomials for every $\sigma\in I_K$.

  Finally, assume that $\sigma$ is a topological generator for $I_K$. We must show that $\Phi_{Y,m,\sigma}(u)=\Phi_{Y,m,\tau}(u)$.
 Since we can approximate $\sigma$ by powers of $\tau$, the fact that $\Phi_{Y,m}$ is locally constant implies that $\Phi_{Y,m,\sigma}(u)=\Phi_{Y,m,\tau^a}(u)$ for some integer $a$. By symmetry, we also have $\Phi_{Y,m,\tau}(u)=\Phi_{Y,m,\sigma^b}(u)$ for some integer $b$. Since $\Phi_{Y,m,\sigma}(u)$ and $\Phi_{Y,m,\tau}(u)$ are products of cyclotomic polynomials, this implies that $\Phi_{Y,m,\sigma}(u)=\Phi_{Y,m,\tau}(u)$.
\end{proof}

\begin{definition}
Let $Y$ be a smooth and proper $K$-scheme. 
\begin{enumerate}
\item For every $m\geq 0$, we set 
$$\Phi_{Y,m}(u)=\Phi_{Y,m,\sigma}(u)$$ for any topological generator $\sigma$ of $I_K$. By Proposition \ref{prop:indepell}, this definition does not depend on the choice of $\sigma$, and $\Phi_{Y,m}(u)$ is a product of cyclotomic polynomials. We call $\Phi_{Y,m}(u)$ the $m$-th characteristic polynomial of $Y$.	
	
\item We say that a complex number is a monodromy eigenvalue of $Y$ if it is a root of the characteristic polynomial $\Phi_{Y,m}(u)$, for some $m\geq 0$.
\end{enumerate}
\end{definition}

A useful tool to compute monodromy eigenvalues of $Y$ is the A'Campo formula for the monodromy zeta function of $Y$, the alternating product of its characteristic polynomials.

\begin{prop}\label{prop:acampo}
Let $Y$ be a smooth and proper $K$-scheme, and let $\cY$ be an snc-model of $Y$, with special fiber $\cY_k=\sum_{i\in I}N_iE_i$. Then 
$$\prod_{m\geq 0}\Phi_{Y,m}(u)^{(-1)^{m+1}}=\prod_{i\in I}(u^{N_i}-1)^{-\chi(E_i^o)}$$
where $\chi(E_i^o)$ is the $\ell$-adic Euler characteristic of $E_i^o$.	
\end{prop}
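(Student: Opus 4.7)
The plan is to follow A'Campo's classical argument, adapted to nearby cycles on a proper snc degeneration. First, I would reduce to the case $k=\C$ via the Lefschetz principle as in the proof of Proposition \ref{prop:indepell}, so that all cohomology statements can be interpreted analytically and $\sigma$ may be taken to be the canonical generator $\tau$ of $I_K$. Since $\cY$ is proper over $R$, proper base change for the nearby-cycles functor $R\psi=R\psi_{\cY/R}$ yields a $\sigma$-equivariant isomorphism
$$H^m(Y\times_K K^a,\Q_\ell)\;\cong\; H^m(\cY_k\times_k k^a, R\psi\,\Q_\ell)$$
for every $m\geq 0$. Thus the left-hand side of the desired identity, which is the inverse of the monodromy zeta function $\zeta(Y,u)=\prod_m \Phi_{Y,m}(u)^{(-1)^m}$, equals the inverse of the alternating product of characteristic polynomials of $\sigma$ on the hypercohomology of $R\psi\,\Q_\ell$.

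Next I would exploit the fact that the alternating product of characteristic polynomials is additive in distinguished triangles of constructible $\Q_\ell$-complexes. Combined with the open-closed devissage along the constructible stratification $\{E_J^o\,|\,\emptyset\neq J\subset I\}$ of $\cY_k$, on each of whose strata $R\psi\,\Q_\ell$ is lisse, this yields the key multiplicativity formula
$$\zeta(Y,u)\;=\;\prod_{\emptyset\neq J\subset I}\zeta_J(u)^{\chi_c(E_J^o)},$$
where $\zeta_J(u)$ is the common value at any geometric point $\bar{x}\in E_J^o$ of the local monodromy zeta function $\prod_m\det(u\cdot\mathrm{Id}-\sigma\mid (R^m\psi\,\Q_\ell)_{\bar x})^{(-1)^m}$. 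Since both cohomology and compactly supported cohomology have equal $\ell$-adic Euler characteristics over an algebraically closed base, $\chi_c(E_J^o)=\chi(E_J^o)$.

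The core of the proof is then the local computation of $\zeta_J(u)$. Writing $J=\{i_1,\ldots,i_r\}$, the snc condition identifies the strict henselization of $\cY$ at $\bar x$ with $R\{x_1,\ldots,x_d\}$ in which $t=u\cdot x_1^{N_{i_1}}\cdots x_r^{N_{i_r}}$ for a unit $u$, so the analytic Milnor fiber at $\bar x$ is, up to a trivial polydisk factor, the space $M_r=\{x_1^{N_{i_1}}\cdots x_r^{N_{i_r}}=\epsilon\}$. For $r=1$ this is the set of $N_{i_1}$ roots of $t$, cyclically permuted by the monodromy, giving $\zeta_{\{i_1\}}(u)=u^{N_{i_1}}-1$. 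For $r\geq 2$ the fiber $M_r$ is a disjoint union of copies of $(\C^*)^{r-1}$ (parametrized by solutions of a lattice equation) on which the monodromy acts by translation on the free $(r-1)$-torus, and a short direct computation (or A'Campo's original vanishing argument) shows that the alternating product of characteristic polynomials on $H^*(M_r,\Q_\ell)$ is identically $1$. Substituting back gives $\zeta(Y,u)=\prod_{i\in I}(u^{N_i}-1)^{\chi(E_i^o)}$, and inverting yields the claimed formula.

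The main obstacle I foresee is the $r\geq 2$ vanishing: one must produce an explicit description of the higher-codimension Milnor fibers together with the monodromy action, and verify that contributions in degrees paired by the underlying $(r-1)$-torus structure cancel in the alternating product. This is where most of the care is needed; everything else is devissage and base change.
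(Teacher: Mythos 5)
The paper's own ``proof'' of this proposition is simply a citation to Theorem 2.6.2 of \cite{Ni-tameram}, so you are in effect being asked to reconstruct the argument behind that reference, and what you have written is indeed the standard A'Campo-style proof of it. Your overall structure is correct: reduction to $k=\C$, proper base change for nearby cycles $H^m(Y\times_K K^a,\Q_\ell)\cong H^m(\cY_k\times_k k^a,R\psi\Q_\ell)$, multiplicativity of the alternating product of characteristic polynomials across the stratification $\{E_J^o\}$ (using that $R\psi\Q_\ell$ is lisse on each stratum, and that $\chi_c=\chi$), and the local computation that reduces everything to the contributions from $|J|=1$. Your conventions and bookkeeping also check out: with $\zeta(Y,u)=\prod_m\Phi_{Y,m}(u)^{(-1)^m}$ and $\zeta_{\{i\}}(u)=u^{N_i}-1$ (only $R^0\psi$ contributes at a point of $E_i^o$, with a cyclic action of order $N_i$), inverting gives exactly the stated identity.

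The one place where you correctly flag a gap but leave it open is the vanishing $\zeta_J(u)=1$ for $|J|\geq 2$. Your homotopical description of $M_r$ as a disjoint union of $(r-1)$-tori with the monodromy acting by a translation is right, but the clean way to finish (and A'Campo's original argument) is to observe that $M_r$ admits a fixed-point-free $S^1$-action commuting with the monodromy whenever $r\geq 2$; this forces every Lefschetz number $L(\sigma^n)$, $n\geq 0$, to vanish, hence the alternating product of characteristic polynomials is identically $1$ (and also $\chi(M_r)=0$, so the two sign conventions for zeta functions agree). You should also be a bit careful in the devissage step: when you write $\prod_m\det(u-\sigma\mid H^m_c(E_J^o,R^\bullet\psi\Q_\ell))^{(-1)^m}=\det(u-\sigma\mid (R^\bullet\psi\Q_\ell)_{\bar x})^{\chi_c(E_J^o)}$, this uses that after base change to $k^a$ the inertia action on the base is trivial and the sheaf is lisse with constant $\sigma$-conjugacy class of stalk actions, so the Künneth-type factorization is legitimate. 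With those two points filled in, your argument is a complete and correct proof, and it is the argument underlying the paper's cited reference.
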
	
\begin{proof}
This is contained in Theorem 2.6.2 in \cite{Ni-tameram}.	
\end{proof}

Beware that, due to cancellations between characteristic polynomials in odd and even degrees, this formula is often not sufficient to determine {\em all}  monodromy eigenvalues. It is sufficient, for instance, when $Y$ only has cohomology in even degrees (e.g., when $Y$ is a $K3$ surface).

 \begin{remark}\label{rem:monodromy}
 Assume that $k=\C$. Let $C$ be a smooth complex curve, let $s$ be a closed point of $C$, and let $t$ be a local parameter in $\mathcal{O}_{C,s}$. Then we can identify the completed local ring $\widehat{\mathcal{O}}_{C,s}$ with $R=\C\llbr t\rrbr$. 
 Let $F$ be the fraction field of  $\mathcal{O}_{C,s}$, and let $Y$ be a smooth and proper $F$-scheme. Then the monodromy eigenvalues of $Y\otimes_F K$ have a more classical topological interpretation:   
 $H^m(Y\times_F K^a,\Q_\ell)$ is canonically isomorphic with the degree $m$ singular cohomology with $\Q_\ell$-coefficients of the nearby fiber of $Y$ at $t=0$, by Deligne's comparison theorem for $\ell$-adic and complex analytic nearby cycles in Expos\'e XIV of  \cite{sga7b}. This isomorphism identifies the classical monodromy action on the nearby cohomology with the action of the canonical topological generator of $I_K=\hat{\mu}(\C)$ on 
  $H^m(Y\times_K K^a,\Q_\ell)$. In particular, the monodromy eigenvalues of $Y$ are precisely the eigenvalues of the monodromy operator on the singular cohomology of the nearby fiber of $Y$ at $t=0$. 
\end{remark} 

\subsection{Poles of motivic zeta functions}\label{ss:poles}
The notion of pole of a motivic zeta function needs to be defined with care, because the coefficient ring  $\mathcal{M}^{\hat{\mu}}_k$ is not a domain. Different proposals have appeared in the literature (see in particular \cite{rove}); we will give a definition that is as basic as possible. In the theory of motivic zeta functions, it is common to make a formal substitution $T=\LL^{-s}$ and view the zeta function as a power series in $\LL^{-s}$, in accordance with the $p$-adic setting where $T=p^{-s}$.

\begin{definition}\label{def:poles}
Let $Z(T)$ be an element of 
$$\mathcal{M}^{\hat{\mu}}_k\left[T,\frac{1}{1-\LL^aT^b}  \right]_{(a,b)\in \Z\times \Z_{>0}}.$$
We say that a subset $P$ of $\Q$ is a set of candidate poles for $Z(\LL^{-s})$ if $Z(T)$ lies in 
$$\mathcal{M}^{\hat{\mu}}_k\left[T,\frac{1}{1-\LL^aT^b}  \right]_{(a,b)\in \Z\times \Z_{>0},\,a/b\in P}.$$
We say that a number $s_0\in \Q$ is a pole of $Z(\LL^{-s})$ if it lies in every set of candidate poles of $Z(\LL^{-s})$.

We analogously define the set of poles of an element of 
$$\mathcal{M}^{\hat{\mu}}_k\left[T,\frac{1}{1-\LL^aT^b},\frac{1}{1-\LL^{-m}}  \right]_{(a,b)\in \Z\times \Z_{>0},\,m\in \Z_{>0}}.$$
\end{definition}

\begin{remark}
{\em A priori}, the set of poles of $Z(\LL^{-s})$ could shrink by inverting the elements $1-\LL^{-m}$, because it is not known whether these are zero-divisors. We will always indicate explicitly in which ring we consider the object $Z(T)$.
\end{remark}

\begin{example}\label{exam:poles} Let $X$ be a geometrically connected smooth and proper $K$-scheme with trivial canonical bundle, and let $\omega$ be a volume form on $X$. Let $\cX$ be an snc-model for $X$, and let $$\{(N_i,\nu_i)\,|\,i\in I\}$$ be the set of numerical data associated with $(\cX,\omega)$. Then 
	$$\{-\nu_i/N_i\,|\,i\in I\}$$ is a set of candidate poles for $Z_{X,\omega}(\LL^{-s})$, by the explicit formula in Theorem \ref{thm:snc}. 

		Formula \eqref{eq:scalar} implies that multiplying $\omega$ with $\lambda\in K^{\ast}$ shifts the poles of $Z_{X,\omega}(\LL^{-s})$ by $-\mathrm{ord}_t \lambda$. Moreover, if we denote by $P$ the set of poles of $Z_{X,\omega}(\LL^{-s})$, then it follows from Proposition \ref{prop:bc} that, for every integer $n>0$, the set of poles of $Z_{X\otimes_K K(n),\omega \otimes_K K(n)}(\LL^{-s})$ is contained in 
		$\{ns_0\,|\,s_0\in P\}$, but this inclusion is often strict: there are examples of $K3$ surfaces $X$ whose motivic zeta function has multiple poles (see for instance Example 5.3.5 in \cite{HaNi-CY}), but when $n$ is sufficiently divisible, then $X\otimes_K K(n)$ has a Kulikov model in the category of algebraic spaces over $R(n)$ and this implies that $Z_{X\otimes_K K(n),\omega \otimes_K K(n)}(\LL^{-s})$ has a unique pole (see \cite{StVo} and \cite{HaNi-CY}). 
\end{example}

Typically, many of the candidate poles coming from an snc-model are not actual poles of $Z_{X,\omega}(\LL^{-s})$. For one thing, the set of candidates depends heavily on the choice of the snc-model; but even the intersection of these sets over all snc-models is often much larger than the set of poles. One can create smaller sets of candidate poles by replacing snc-models by log-smooth models as in Theorem 5.3.1 of \cite{BuNi}, but even these sets can still be too large to compute the set of poles. On the other end, the numerical data of a minimal dlt-model of $X$ (see \cite{KNX}) only capture the largest pole of $Z_{X,\omega}(\LL^{-s})$. There is no clear candidate for a type of $R$-model of $X$ whose numerical data compute exactly the poles of $Z_{X,\omega}(\LL^{-s})$.

Definition \ref{def:poles} seems difficult to use in practice because we do not control all sets of candidate poles. In order to prove that a set $P\subset \Q$ is the set of poles of $Z(\LL^{-s})$, one can try the following strategy.

\medskip{\em Step 1: show that $P$ is a set of candidate poles by finding a suitable expression for $Z(T)$.} {\em A priori}, there is no guarantee that the set of poles is also a set of candidate poles, but this is the case in all known examples of motivic zeta functions.

\medskip{\em Step 2: show that the set of poles is not strictly contained in $P$.} For this purpose, one can apply a motivic realization morphism, that is, a ring morphism from $\mathcal{M}^{\hat{\mu}}_k$ to a more concrete ring (ideally, a domain). Many examples can be found in Chapter 2 of \cite{CLNS}. For the applications in Section \ref{sec:nofill}, we will use the Euler-Poincar\'e realization 
$$\mathrm{EP}\colon \mathcal{M}^{\hat{\mu}}_k\to \Z[w,w^{-1}],\, [S]\mapsto \mathrm{EP}(S)$$
described in Proposition 3.5.10 of Chapter 2 in \cite{CLNS}. Since $k$ has characteristic zero, this ring morphism is uniquely characterized by the property that it sends the class of a smooth and proper $k$-scheme $S$ with good $\hat{\mu}$-action to the Euler-Poincar\'e polynomial 
$$\mathrm{EP}(S)=\sum_{m\geq 0}(-1)^mb_m(S)w^m$$
where $b_m(S)$ denotes the $m$-th $\ell$-adic Betti number of $S$ (the invariant $\mathrm{EP}(S)$ ignores the $\hat{\mu}$-action on $S$). When $S$ is not smooth and proper, one can compute $\mathrm{EP}(S)$ from the weight filtration on the $\ell$-adic cohomology of $S$ with compact supports,  or by writing $[S]$ in terms of classes of smooth and proper $k$-schemes in $\Gro^{\hat{\mu}}(\Var_k)$. 
For instance, $\mathrm{EP}(\mathbb{P}^1_k)=1+w^2$ and, by the scissor relations,  $\mathrm{EP}(\LL)=\mathrm{EP}(\Pro^1_k)-\mathrm{EP}(\Spec k)=w^2$. Applying $\mathrm{EP}$ to the coefficients of $Z(T)$, we obtain an element $Z^{\mathrm{EP}}(T)$ of $\Q(w,T)$. To prove that an element $s_0\in P$ is a pole of $Z(\LL^{-s})$, we can perform a direct residue calculation to check whether there exists a complex root of unity $\zeta$ such that $\zeta w^{-2s_0}$ is a pole of $Z^{\mathrm{EP}}(T)$. Of course, this is a sufficient, but not a necessary condition, because the realization morphism $\mathrm{EP}$ has a large kernel. Depending on the series $Z(T)$, one may need to find finer realization morphisms to prove that the elements of $P$ are actual poles. Note that $\mathrm{EP}$ also detects poles of $Z(T)$ viewed as an element in 
$\cR$, because $\mathrm{EP}$ localizes to a ring morphism 
$$\mathcal{M}_k^{\hat{\mu}}\left[ \frac{1}{1-\LL^{-m}}\right]_{m>0}\to \Q(w).$$

\begin{example}\label{exam:largestpole}
	Let $X$ be a geometrically connected smooth and proper $K$-scheme with trivial canonical bundle and let $\omega$ be a volume form on $X$. Let $$\{(N_i,\nu_i)\,|\,i\in I\}$$ be the set of numerical data associated with $(\cX,\omega)$, for some  snc-model $\cX$ for $X$, and let $s_{\max}$ be the maximum of the associated set of candidate poles 
	$$\{-\nu_i/N_i\,|\,i\in I\}.$$ 
 Using a residue calculation on the Euler-Poincar\'e realization $Z^{\mathrm{EP}}(T)$ as in Step 2 above, it is proved in Theorem 3.2.3 of \cite{HaNi-CY} that $s_{\max}$ is always a pole of $Z_{X,\omega}(\LL^{-s})$; the proof shows that this remains true when we view $Z_{X,\omega}(\LL^{-s})$ as an element in $\cR$. In particular, $s_{\max}$ is independent of the choice of the snc-model $\cX$. 
\end{example}
 
\subsection{The monodromy property}\label{ss:monprop}
The following property is the analog for varieties with trivial canonical bundle of Igusa's monodromy conjecture for $p$-adic local zeta functions (Conjecture 2.3.2 in \cite{denef-bourbaki}).

\begin{definition}\label{def:MP}
 Let $X$ be a geometrically connected smooth and proper $K$-scheme with trivial canonical bundle. 
    Let $\mathcal{E}_X$ be the set of couples $(a,b)$ in $\Z^2$ such that $b>0$ and $\exp(2\pi i a/b)$
  is a monodromy eigenvalue of $X$. We say that $X$ satisfies the {\em monodromy property} if, for any volume form $\omega$ on $X$, the motivic zeta function  $Z_{X,\omega}(T)$ belongs to the ring
 $$\mathrm{Mon}_X=\mathcal{M}^{\hat{\mu}}_k\left[T,\frac{1}{1-\LL^a T^b} \right]_{(a,b)\in \mathcal{E}_X}.$$
\end{definition}
This property does not depend on $\omega$, by equation \eqref{eq:scalar}.  It implies that, for every pole $s_0$ of  $Z_{X,\omega}(\LL^{-s})$, the value $\exp(2\pi i s_0)$ is a monodromy eigenvalue of $X$. 

\begin{example}\label{exam:largestpolemon}
With the notations from Example \ref{exam:largestpole}, it is proved in Theorem 3.3.3 in \cite{HaNi-CY} that $\exp(2\pi i s_{\max})$ is always a monodromy eigenvalue on the cohomology of $X$ of middle degree $\dim(X)$.
\end{example}

 \begin{question}\label{ques:moncon}
 Does every geometrically connected smooth and proper $K$-scheme with trivial canonical bundle satisfy the monodromy property? 
 \end{question}

Based on the analogy with Igusa's monodromy conjecture, we believe that the answer to Question \ref{ques:moncon} should be positive, but only a limited number of cases are known. The question is trivial if $X$ has a snc-model with reduced special fiber, because then all the poles of $Z_{X,\omega}(\LL^{-s})$ are integers and all monodromy eigenvalues of $X$ are equal to $1$. If $X$ satisfies the monodromy property, then so does $X\otimes_K K(n)$ for any $n>0$ by Example \ref{exam:poles}, but the converse implication is unknown (and equivalent to Question \ref{ques:moncon} by the semistable reduction theorem). It is not hard to prove that 
the class of varieties that satisfy the monodromy property is closed under products (Proposition 7.2 in \cite{pagano}).

It was shown in \cite{HaNi} that abelian varieties satisfy the monodromy property. This result was generalized in \cite{HaNi-CY} to varieties $X$ with a so-called Galois-equivariant Kulikov model; this includes the case of Kummer surfaces, by Theorem 6.2 in \cite{Overkamp}. In all these cases, it is proved that $Z_{X,\omega}(\LL^{-s})$ has a unique pole, so that the result follows from Example \ref{exam:largestpolemon}.

A positive answer is also known for triple-point-free degenerations of $K3$-surfaces \cite{JasPhD,LunPhD} and for $K3$-surfaces that have a regular proper $R$-model such that the reduced special fiber has at worst $ADE$ singularities \cite{LunPhD}. It was proved in \cite{pagano} that, when $S$ is an abelian surface over $K$, or a $K3$-surface that satisfies the monodromy property,  the Hilbert schemes $\mathrm{Hilb}^n(S)$ also satisfy the monodromy property for all $n>0$. All these cases contain examples where 
$Z_{X,\omega}(\LL^{-s})$ has multiple poles. A basic case that is still open are $K3$-surfaces with finite monodromy action on their $\ell$-adic cohomology spaces.
 
 The motivic zeta function $Z_{X,\omega}(T)$ has a partial cohomological interpretation in the form of a trace formula (see Theorem 5.4 in \cite{NiSe} and Theorem 6.4 in \cite{Ni-trace}): for every positive integer $n$, the Euler characteristic specialization of the degree $n$ coefficient in  $Z_{X,\omega}(T)$ equals the $n$-th Lefschetz number of the monodromy on $X$ (that is, the alternating sum of the traces of $\sigma^n$ on the $\ell$-adic cohomology spaces of $X$, for any topological generator $\sigma$ of $I_K$). However, this Euler characteristic specialization destroys all information about the exponents $\nu_i$ in the formula \eqref{eq:defzeta}, so that it cannot say much about the poles of $Z_{X,\omega}(T)$.

\subsection{Birational invariance of the monodromy conjecture}
As an application of Theorem \ref{thm:inv}, we will show that the answer to Question \ref{ques:moncon} only depends on the birational equivalence class of $X$, up to inverting the elements $1-\LL^{-m}$ in $\mathrm{Mon}_X$ for all $m>0$.

\begin{prop}\label{prop:eigen}
Let $X$ and $X'$ be geometrically connected smooth and proper $K$-schemes with trivial canonical bundles, and assume that $X$ and $X'$ are birational.
 Then $\Phi_{X,m}(u)=\Phi_{X',m}(u)$ for all $m\geq 0$. In particular, $X$ and $X'$ have the same monodromy eigenvalues.
\end{prop}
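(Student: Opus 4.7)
The plan is to combine the Lefschetz principle with the birational invariance of the motivic zeta function (Corollary \ref{cor:birat}) in order to compare the nearby cohomologies of $X$ and $X'$ and their monodromy structure.

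First, by the Lefschetz principle (as in the proof of Proposition \ref{prop:indepell}), I would reduce to the case $k=\C$. In this setting, Remark \ref{rem:monodromy} identifies $\Phi_{X,m}(u)$ with the characteristic polynomial of the classical topological monodromy on the singular cohomology of the nearby fiber of $X$ at $t=0$, and similarly for $X'$. Fix a volume form $\omega$ on $X$; the birational map transports $\omega$ to a volume form $\omega'$ on $X'$ with $\omega=\omega'$ in $\Omega^d_{F/K}$, where $F=K(X)=K(X')$, and any snc-model of $X$ or $X'$ is automatically an snc-model of $\omega$ in the sense of Definition \ref{def:snc}, its horizontal divisor being trivial.

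The alternating product $\prod_{m\geq 0} \Phi_{X,m}(u)^{(-1)^{m+1}}$ is encoded in the numerical data $(N_i,\chi(E_i^o))$ of an snc-model via A'Campo's formula (Proposition \ref{prop:acampo}). These numerical data are in turn read off the Euler--Poincar\'e realization of $Z_{X,\omega}(T)$: as recalled at the end of Section \ref{ss:monprop}, the Euler-characteristic specialization of the $n$-th coefficient of $Z_{X,\omega}(T)$ equals the $n$-th Lefschetz number of monodromy, and these Lefschetz numbers determine the monodromy zeta function. Combined with Corollary \ref{cor:birat}, this yields the equality of monodromy zeta functions for $X$ and $X'$, hence the equality of the alternating products.

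The harder step, and the main obstacle, is to lift this alternating equality to the individual identities $\Phi_{X,m}(u)=\Phi_{X',m}(u)$. For this I would apply a finer realization of $Z_{X,\omega}(T)$, namely the Hodge--Deligne realization with its $\hat\mu$-action, which recovers the Hodge--Deligne polynomial of the limit mixed Hodge structure on the nearby cohomology together with the semisimple part of the monodromy. Since the Hodge grading by $(p,q)$ separates cohomology degrees through $m=p+q$, one reads off $\Phi_{X,m}(u)$ for each $m$ individually. The technical subtlety is to verify that this refined realization factors through the localization $\cR$ of Corollary \ref{cor:birat}; this is plausible because the factors $1-\LL^{-m}$ inverted in $\cR$ remain invertible after Hodge--Deligne realization.
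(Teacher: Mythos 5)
Your approach diverges from the paper's and, as you yourself anticipate, it has a genuine gap at the crucial ``lifting'' step. The Euler characteristic/A'Campo route, applied through $Z_{X,\omega}(T)$ and Corollary \ref{cor:birat}, only recovers the monodromy zeta function $\prod_m \Phi_{X,m}(u)^{(-1)^{m+1}}$, i.e. the alternating product. Your proposed remedy---apply the Hodge--Deligne realization and use ``the Hodge grading by $(p,q)$ separates cohomology degrees through $m=p+q$''---does not work, because the structure controlling the realization of $Z_{X,\omega}(T)$ is the \emph{limit} mixed Hodge structure on nearby cohomology, which is not pure: $\mathrm{gr}^W_{p+q}H^m_{\mathrm{lim}}$ is typically nonzero for $p+q\neq m$. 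In fact every cohomological realization of the motivic zeta function feeds into the Steenbrink-type spectral sequence of the special fiber strata, and this inherently mixes cohomological degrees; no refinement of the E-polynomial alone can separate them. This is precisely why the trace-formula remark at the end of Section \ref{ss:monprop} only speaks of Lefschetz numbers.

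The paper's proof bypasses the motivic zeta function entirely. It applies Kontsevich's theorem directly to $X$ and $X'$ as $K$-varieties, obtaining $[X]=[X']$ in $\widehat{\mathcal{M}}_K$; it then invokes the $\ell$-adic (Galois-equivariant) realization of the Grothendieck ring of $K$-varieties (Chapter 2, \S4.3.9 in \cite{CLNS}), which, using the purity of $H^m$ for smooth and proper $K$-schemes, determines the class of each $H^m(X\times_K K^a,\Q_\ell)$ \emph{individually} in the Grothendieck ring of $\mathrm{Gal}(K^a/K)$-representations, not just the alternating sum. This degree-by-degree recovery is exactly the piece your argument is missing: purity is available on the generic fiber as a $K$-variety, but not on the limit mixed Hodge structures seen by the realizations of $Z_{X,\omega}(T)$. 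Note also that no reduction to $k=\C$ is needed for this argument.
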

\begin{proof}
By Kontsevich's theorem, the schemes $X$ and $X'$ have the same class in the completed localized Grothendieck ring of $K$-varieties $\widehat{\mathcal{M}}_K$ (see Chapter 7, \S3.5.5 in \cite{CLNS}).
By Chapter 2, \S4.3.9 in \cite{CLNS}, this implies that for all $m\geq 0$ and all primes $\ell$, the $\mathrm{Gal}(K^a/K)$-modules  $H^m(X\times_K K^a,\Q_\ell)$ and $H^m(X'\times_K K^a,\Q_\ell)$ have the same class in the Grothendieck ring of $\ell$-adic $\mathrm{Gal}(K^a/K)$-representations. In particular, $\Phi_{X,m}(u)=\Phi_{X',m}(u)$.
\end{proof}

\begin{theorem}\label{thm:moncon}
Let $X$ and $X'$ be geometrically connected smooth and proper $K$-schemes with trivial canonical bundle. Assume that $X$ and $X'$ are birational. Let $\omega$ and $\omega'$ be volume forms on $X$ and $X'$, respectively.  Then the image of   $Z_{X,\omega}(T)$ in $\cR$ lies in
  $\mathrm{Mon}_X[(1-\LL^{-m})^{-1}]_{m>0}$ if and only if the analogous property holds for $(X',\omega')$.
\end{theorem}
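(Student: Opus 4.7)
The plan is to reduce the statement to the two essentially independent facts already established in the paper: the birational invariance of the zeta function in $\cR$ (Corollary \ref{cor:birat}) and the birational invariance of the monodromy eigenvalues (Proposition \ref{prop:eigen}). The only extra ingredient needed is the transformation law \eqref{eq:scalar} for rescaling the volume form, together with the observation that the target ring $\mathrm{Mon}_X[(1-\LL^{-m})^{-1}]_{m>0}$ is well-behaved under that rescaling.

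First I would fix a birational map $f\colon X'\dashrightarrow X$ and, invoking Corollary \ref{cor:birat}, introduce the unique volume form $\omega'_0$ on $X'$ agreeing with $f^{*}\omega$ on the locus where $f$ is defined; that corollary gives $Z_{X,\omega}(T)=Z_{X',\omega'_0}(T)$ in $\cR$. Since $X'$ is geometrically connected, the given volume form $\omega'$ differs from $\omega'_0$ by a scalar $\lambda\in K^{\times}$, and equation \eqref{eq:scalar} (which descends to $\cR$) yields $Z_{X',\omega'}(T)=Z_{X',\omega'_0}(\LL^{-\mathrm{ord}_t\lambda}\,T)$ in $\cR$. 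Next, Proposition \ref{prop:eigen} gives $\mathcal{E}_X=\mathcal{E}_{X'}$, so the rings $\mathrm{Mon}_X$ and $\mathrm{Mon}_{X'}$ coincide.

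The remaining step is to verify that $\mathrm{Mon}_X[(1-\LL^{-m})^{-1}]_{m>0}$ is stable under the substitution $T\mapsto \LL^n T$ for any $n\in\Z$. This is immediate from the presentation: each generator $(1-\LL^{a}T^{b})^{-1}$ with $(a,b)\in\mathcal{E}_X$ maps to $(1-\LL^{a+nb}T^{b})^{-1}$, and $\exp(2\pi i(a+nb)/b)=\exp(2\pi i a/b)$ so $(a+nb,b)\in\mathcal{E}_X$ as well. Chaining the three identities together, membership of $Z_{X,\omega}(T)$ in $\mathrm{Mon}_X[(1-\LL^{-m})^{-1}]_{m>0}$ transports to $Z_{X',\omega'_0}(T)$ (by Corollary \ref{cor:birat}) and then to $Z_{X',\omega'}(T)$ (by the scaling invariance just noted), inside $\mathrm{Mon}_{X'}[(1-\LL^{-m})^{-1}]_{m>0}$; symmetry finishes the equivalence.

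The hardest inputs have already been carried out earlier in the paper: weak factorization (used in Theorem \ref{thm:inv}) to make $Z_{\omega}(T)$ a well-defined birational invariant, and Kontsevich's theorem (used in Proposition \ref{prop:eigen}) to identify the $\ell$-adic Galois representations up to semisimplification. Granting these, the proof is essentially a bookkeeping exercise and I do not expect any genuine obstacle; the only subtle point worth writing out carefully is the stability of the Mon-ring under $T\mapsto\LL^n T$, since this is what lets the choice of $\omega'$ be arbitrary rather than forced to be $f^{*}\omega$.
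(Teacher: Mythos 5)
Your proposal is correct and follows essentially the same route as the paper's own proof: invoke Corollary \ref{cor:birat} for the equality of zeta functions in $\cR$ (with compatible volume forms), Proposition \ref{prop:eigen} for $\mathrm{Mon}_X=\mathrm{Mon}_{X'}$, and the observation — recorded in the paper right after Definition \ref{def:MP}, and which you verify explicitly via the stability of $\mathcal{E}_X$ under $(a,b)\mapsto(a+nb,b)$ — that membership in $\mathrm{Mon}_X$ is unaffected by rescaling $\omega$ thanks to equation \eqref{eq:scalar}. The only difference is that you spell out the rescaling step, which the paper states without detail.
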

\begin{proof} Since the desired property is independent of $\omega$ and $\omega'$, 
 this follows immediately from Corollary \ref{cor:birat} and Proposition \ref{prop:eigen}.
\end{proof}

In other words, up to inverting the elements $1-\LL^{-m}$, the variety $X$ satisfies the monodromy property if and only if this is true for $X'$. This extends the list of known cases given after Question \ref{ques:moncon}. In particular, we obtain the following result.

\begin{cor}\label{cor:biratHilb}
	Let $S$ be an abelian surface over $K$, or a $K3$ surface that satisfies the monodromy property, and let $n$ be a positive integer. 
Let $X$ be a geometrically connected smooth and proper $K$-scheme with trivial canonical bundle. Assume that $X$ is birational to $\mathrm{Hilb}^n(S)$, for some $n>0$. Then $X$ satisfies the monodromy property up to inverting $1-\LL^{-m}$ for all $m>0$: for every volume form $\omega$ on $X$, the image of   $Z_{X,\omega}(T)$ in $\cR$ lies in
$\mathrm{Mon}_X[(1-\LL^{-m})^{-1}]_{m>0}$.
\end{cor}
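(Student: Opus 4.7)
The plan is to obtain this as an immediate consequence of Theorem~\ref{thm:moncon} combined with the result of Pagano cited in Section~\ref{ss:monprop}. First, I would recall from the discussion preceding Question~\ref{ques:moncon} that Pagano has proved in \cite{pagano} that, for $S$ an abelian surface over $K$ or a $K3$-surface satisfying the monodromy property, the Hilbert scheme $\mathrm{Hilb}^n(S)$ satisfies the (full) monodromy property for every $n>0$. In particular, $Z_{\mathrm{Hilb}^n(S),\omega'}(T)$ lies in $\mathrm{Mon}_{\mathrm{Hilb}^n(S)}$ for any volume form $\omega'$ on $\mathrm{Hilb}^n(S)$, a fortiori in the larger ring $\mathrm{Mon}_{\mathrm{Hilb}^n(S)}[(1-\LL^{-m})^{-1}]_{m>0}$.

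Next, I would check that the hypotheses of Theorem~\ref{thm:moncon} are met for the pair $(X,\mathrm{Hilb}^n(S))$. The Hilbert scheme $\mathrm{Hilb}^n(S)$ is a geometrically connected smooth and proper $K$-scheme whose canonical bundle is trivial (as $S$ is holomorphic symplectic and $\mathrm{Hilb}^n$ preserves triviality of the canonical bundle in this setting, by Beauville's classical computation). The variety $X$ is by assumption a geometrically connected smooth and proper $K$-scheme with trivial canonical bundle that is birational to $\mathrm{Hilb}^n(S)$, so Theorem~\ref{thm:moncon} applies.

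Finally, I would invoke Theorem~\ref{thm:moncon}: it asserts that the image of $Z_{X,\omega}(T)$ in $\cR$ lies in $\mathrm{Mon}_X[(1-\LL^{-m})^{-1}]_{m>0}$ if and only if the analogous property holds for $(\mathrm{Hilb}^n(S),\omega')$; and by Proposition~\ref{prop:eigen}, the birational invariance of monodromy eigenvalues identifies $\mathrm{Mon}_X$ and $\mathrm{Mon}_{\mathrm{Hilb}^n(S)}$, so the two localized rings coincide. Since Pagano's result supplies the required containment on the $\mathrm{Hilb}^n(S)$ side, the same holds for $X$, which is exactly the statement of the corollary.

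There is no serious obstacle here: the entire proof is a one-line combination of Theorem~\ref{thm:moncon} with the already-cited literature. The only point worth articulating carefully is that what is transferred across the birational equivalence is membership in the ring $\mathrm{Mon}_X[(1-\LL^{-m})^{-1}]_{m>0}$, not the full monodromy property in $\mathrm{Mon}_X$, because Corollary~\ref{cor:birat} is proved only in $\cR$ rather than in $\mathcal{M}^{\hat{\mu}}_k\llbr T \rrbr$; this is precisely the reason the statement includes the inversion of $1-\LL^{-m}$.
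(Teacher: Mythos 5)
Your proof is correct and follows the same approach as the paper, which simply cites Theorem~\ref{thm:moncon} together with the main result of \cite{pagano}; you have merely unpacked the verification that $\mathrm{Hilb}^n(S)$ satisfies the hypotheses of Theorem~\ref{thm:moncon} and that Pagano's result supplies the required membership on that side. The closing remark correctly identifies why the statement is phrased only up to inverting $1-\LL^{-m}$.
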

\begin{proof}
This follows at once from Theorem \ref{thm:moncon} and the main result of \cite{pagano}.	
\end{proof}	

\section{Degenerations with trivial monodromy and no smooth fillings}\label{sec:nofill}
 Throughout this section, we assume that $k$ is algebraically closed.
 
 \subsection{Monodromy and good reduction}
Let $X$ be a geometrically connected smooth and proper $K$-scheme. It is natural to ask when we can extend $X$ to a smooth and proper family $\cX$ over the formal disk $\Spec R$. This property is formalized in the following definition.

\begin{definition}\label{def:goodred}
We say that $X$ has good reduction if there exists a  smooth and proper algebraic space $\cX$ over $R$ such that $\cX\times_R K$ is isomorphic to $X$.
\end{definition}

Allowing $\cX$ to be an algebraic space offers more flexibility: from Proposition 5.1 in \cite{matsumoto}, one can deduce the existence of a $K3$-surface $X$ over $K$ that has good reduction in the sense of Definition \ref{def:goodred}, but does not have a smooth and proper $R$-model in the category of schemes, even after base change to any finite extension of $K$. A strong obstruction to  good reduction is provided by the monodromy action on the cohomology of $X$.  
 \begin{definition}\label{def:cohgood}
 	We say that $X$ has cohomological good reduction if the monodromy action of $I_K$ on the $\ell$-adic cohomology spaces $H^m(X\times_K K^a,\Q_\ell)$ is trivial, for all primes $\ell$ and all $m\geq 0$.
 \end{definition}
 
 The smooth and proper base change theorem implies that, if $X$ has good reduction, then it has cohomological good reduction. Thus, the triviality of the monodromy action on the cohomology is a necessary condition for $X$ to have good reduction. However, it is not always sufficient; let us recall a few classical results.

\begin{enumerate}
\item {\em Curves.} If $X$ is a curve of genus zero over $K$, then $X$ is isomorphic to $\mathbb{P}^1_K$ by the triviality of the Brauer group of $K$. In particular, it has good reduction.
If $X$ is a genus one curve, then $I_K$ acts trivially on $H^1(X\times_K K^a,\Q_\ell)$ if and only if the Jacobian of $X$ has good reduction; in that case, $X$ has good reduction if and only if it has a rational point (which implies that it is isomorphic to its Jacobian, an elliptic curve). For curves $X$ of genus $g\geq 2$, the triviality of the action of $I_K$ is again equivalent to good reduction of the Jacobian. This is also equivalent to the property that the minimal snc-model of $X$ has reduced special fiber and the dual graph of this special fiber is a tree. To characterize good reduction of the curve $X$, one needs a finer criterion in terms of the \'etale fundamental group \cite{oda}.

\item {\em Abelian varieties.} If $X$ is an abelian variety, then Serre and Tate have proved in \cite{serre-tate} that $X$ has good reduction if and only if $I_K$ acts trivially on $H^1(X\times_K K^a,\Q_\ell)$. This is called the {\em N\'eron-Ogg-Shafarevich} criterion. In this case, $X$ has a canonical smooth and proper $R$-model in the category of schemes: the N\'eron model of $X$ is an abelian scheme over $R$.

\item {\em $K3$-surfaces.} For $K3$-surfaces, good reduction corresponds to the Type I case in the Kulikov classification of semistable degenerations of $K3$-surfaces. It follows from the appendix in \cite{HT} that a $K3$-surface $X$ over $K$ has good reduction if and only if $I_K$ acts trivially on $H^2(X\times_K K^a,\Q_\ell)$.

\item {\em Hyperk\"ahler varieties.} The result for $K3$-surfaces has been partially generalized to  hyperk\"ahler varieties in \cite{KLSV}, in the complex analytic setting. Let $X$ be a projective family of hyperk\"ahler manifolds over a punctured complex disk $\Delta^*=\Delta\setminus \{0\}$, meromorphic at the origin $0\in \Delta$. Assume that the monodromy action on $H^2(X_t,\Q)$ is trivial, where $t$ is a point of $\Delta^*$. Then, after a base change to a finite cover of $\Delta^*$,
 there exists a projective family of hyperk\"ahler manifolds $Y$ that is bimeromorphic to $X$ and that extends to a smooth projective family over $\Delta$. 
\end{enumerate}

Cases 2--4 raise the question what can be said for other classes of varieties with trivial canonical bundle. One could hope that the hyperk\"ahler case generalizes as follows: when $X$ has trivial canonical  bundle and cohomological good reduction, then there exist a finite extension $K'$ of $K$ and a smooth and proper $K'$-scheme $Y$ with trivial canonical bundle such that $Y$ is birational to $X\otimes_K K'$ and $Y$ has good reduction. Unfortunately, this is not always the case. Counterexamples were given by Voisin   \cite{voisin}, Wang \cite{wang} and Cynk--van Straten \cite{CvS}. 
  A positive answer exists if we allow mild singularities on our model for $X$.

\begin{theorem}\label{thm:MMP}
Let $X$ be a geometrically connected smooth projective $K$-scheme of dimension $d$ with trivial canonical bundle. Assume that $I_K$ acts trivially on the middle $\ell$-adic cohomology space	
	$H^d(X\times_K K^a,\Q_\ell)$. Then the special fiber of any minimal dlt-model for $X$ over $R$ is irreducible.
\end{theorem}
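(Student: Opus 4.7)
The plan is to reduce the irreducibility of the special fiber of a minimal dlt-model $\cX$ to the statement that the dual complex $\Delta(\cX_k)$ is zero-dimensional, and to extract this dimension from the monodromy hypothesis via the theory of the essential (Kontsevich--Soibelman) skeleton. The existence of a minimal dlt-model of $X$ over $R$ follows from the relative MMP for $K$-trivial degenerations (Fujino, Koll\'ar--Nicaise--Xu, and others). Let $\cX$ be such a model. Since $\cX$ is proper over $\Spec R$ with geometrically connected generic fiber $X$, the fiber $\cX_k$, and hence $\Delta(\cX_k)$, is connected, so it suffices to show $\dim\Delta(\cX_k)=0$.

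The first key input is the theorem of Nicaise--Xu identifying $\Delta(\cX_k)$, for any minimal dlt-model of a smooth proper $K$-trivial $K$-variety, with the essential skeleton $\mathrm{Sk}(X)\subset X^{\an}$, which is an intrinsic birational invariant of $X$. The second is the fact that $\dim\mathrm{Sk}(X)+1$ equals the nilpotency index of the logarithmic monodromy $N$ acting on $H^d(X\times_K K^a,\Q_\ell)$. Under the hypothesis that $I_K$ acts trivially on $H^d$, we have $N=0$, the nilpotency index equals $1$, and hence $\dim\mathrm{Sk}(X)=\dim\Delta(\cX_k)=0$.

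In the motivic zeta function framework of the paper, this dimension bound also admits a more direct interpretation that one could pursue in parallel. Resolving $\cX$ to an snc-model $g\colon\cX'\to\cX$, the dlt condition ensures that the strict transforms $\widetilde{E}_i$ of the components of $\cX_k$ are exactly the components of $\cX'_k$ attaining the minimum value $-s_{\max}$ of $\nu_\bullet/N_\bullet$, while the $g$-exceptional divisors contribute strictly larger values. The subcomplex of $\Delta(\cX'_k)$ spanned by the strict-transform vertices is then canonically $\Delta(\cX_k)$, and by the explicit formula of Theorem \ref{thm:snc} the order of the pole of $Z_{X,\omega}(\LL^{-s})$ at $s=s_{\max}$ equals $\dim\Delta(\cX_k)+1$. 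A refinement of Example \ref{exam:largestpolemon} relating this pole order to the size of the largest Jordan block for the monodromy eigenvalue $\exp(2\pi i s_{\max})=1$ on $H^d$ then forces the pole to be simple when $N=0$, again yielding $\dim\Delta(\cX_k)=0$.

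Combining the vanishing of $\dim\Delta(\cX_k)$ with connectedness, $\Delta(\cX_k)$ is a single vertex and $\cX_k$ has a unique irreducible component, as claimed. The hard part is establishing the link between the dimension of $\mathrm{Sk}(X)$ and the nilpotency index of the monodromy on middle cohomology---equivalently, controlling the order of the pole of $Z_{X,\omega}(\LL^{-s})$ at $s_{\max}$ in terms of the Jordan structure of the monodromy on $H^d$. This rests on the structure theory of minimal dlt-models of K-trivial degenerations together with the Hodge-theoretic interpretation of the essential skeleton, and is the only nontrivial external input required beyond the machinery developed in the earlier sections of the paper.
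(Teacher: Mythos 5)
Your argument follows the same architecture as the paper: identify the dual complex of a minimal dlt-model with the essential skeleton (the correct citation is Theorem 24 in \cite{KNX}, i.e.\ Koll\'ar--Nicaise--Xu, not Nicaise--Xu), and then argue that the monodromy hypothesis forces the essential skeleton to be a point. The gap lies in your ``second key input.'' You assert the \emph{equality} $\dim\mathrm{Sk}(X)+1 = \text{nilpotency index of } N \text{ on } H^d$ as a known fact, but this precise equality is not cleanly available in the generality you invoke, and it is stronger than what is needed. The paper only uses one implication --- $N=0$ on $H^d$ forces $\mathrm{Sk}(X)$ to be a point --- and obtains it by a more specific route: reduce to $k=\C$ (a step you omit, but which is necessary to have Hodge theory available), attach the $\Q$-rational limit mixed Hodge structure $H^d_{\mathrm{lim}}(X)$ of Stewart--Vologodsky/Berkovich, observe that triviality of the $I_K$-action makes this mixed Hodge structure pure of weight $d$ (since the weight filtration is the monodromy weight filtration of $N$), and cite Theorem 3.3.3 in \cite{HaNi-CY}, which deduces from this purity that the essential skeleton is a point. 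You should replace your unreferenced equality with this precise chain.

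Your proposed parallel route via the motivic zeta function is speculative and should be discarded as a proof. The assertion that the order of the pole of $Z_{X,\omega}(\LL^{-s})$ at $s_{\max}$ is governed by the size of the largest Jordan block of the monodromy at the eigenvalue $\exp(2\pi i s_{\max})$ on $H^d$ is a strong refinement of the monodromy property that is not established; Example \ref{exam:largestpolemon} only records that $\exp(2\pi i s_{\max})$ is a monodromy eigenvalue in degree $d$, with no information about pole orders or Jordan structure. You correctly identify where the difficulty lies, but the resolution is the purity argument above, not a pole-order statement.
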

\begin{proof}
 For the definition and the existence of a minimal dlt-model of $X$, we refer to \cite{KNX}.
 We can reduce to the case $k=\C$.
 It is explained in Section 2 of \cite{StVo} how,  for every $m\geq 0$, one can attach to $X$ a $\Q$-rational  mixed Hodge structure $$(H^m_{\mathrm{lim}}(X),F^{\bullet},W_{\bullet})$$ 
 whose underlying $\Q$-vector space $H^m_{\mathrm{lim}}(X)$ is canonically isomorphic with the space $H^m(X\times_K K^a,\Q)$ defined in \cite{berk-Z}.   
  When $X$ is defined over a complex curve as in Remark \ref{rem:monodromy}, then this limit mixed Hodge structure coincides with the degree $m$ limit mixed Hodge structure at $t=0$. The weight filtratrion $W_{\bullet}$ is the monodromy weight filtration associated with the monodromy operator $\Pi$ on $H^m(X\times_K K^a,\Q)$.
  
  In particular, if $I_K$ acts trivially on $H^m(X\times_K K^a,\Q_{\ell})$ for some prime $\ell$, then $H^m_{\mathrm{lim}}(X)$ is pure of weight $m$. By Theorem 3.3.3 in \cite{HaNi-CY}, this  implies that the so-called {\em essential skeleton} of $X$ is a point. The essential skeleton is homeomorphic to the dual complex of the special fiber of any minimal dlt-model $\cX$ of $X$ over $R$, by Theorem 24 in \cite{KNX}, so that $\cX_k$ is irreducible.
\end{proof}

 Let $X$ be a geometrically connected smooth and proper $K$-scheme with trivial canonical bundle and  cohomological good reduction. Even if this does not imply good reduction in general, 
the monodromy property in Question \ref{ques:moncon} suggests that the geometry of snc-models of $X$  should still be quite special: since $I_K$ acts trivially on the $\ell$-adic cohomology of $X$, all the monodromy eigenvalues of $X$ are equal to $1$, so that we expect that, for any volume form $\omega$ on $X$, the motivic zeta function $Z_{X,\omega}(T)$ has only integer poles; more precisely, it should lie in
$$\mathcal{M}^{\hat{\mu}}_k\left[T,\frac{1}{1-\LL^a T^b} \right]_{a\in \Z,\,b\in \Z_{>0},\,a/b\in \Z}.$$
This also imposes further restrictions on the singularities that may appear on a minimal dlt-model for $X$. In the following sections, we will prove that the monodromy property is satisfied in the examples by Cynk--van Straten (Theorem \ref{thm:CvS}) and Voisin  (Theorem \ref{thm:lefschetz}), where we have trivial monodromy but no good reduction. We will also explain how the motivic zeta function acts as an obstruction to good reduction in these examples.

We do not know if  the monodromy property  holds in all cases where the monodromy action is trivial; this is an interesting test case for Question \ref{ques:moncon}. It is open even for hyperk\"ahler varieties, because, even if we assume cohomological good reduction, the construction of a smooth model in \cite{KLSV} still requires base change to a finite extension of $K$, which destroys information about the motivic zeta function. To the best of our knowledge, it is unknown whether hyperk\"ahler varieties with cohomological good reduction always have good reduction, as in the case of $K3$ surfaces.

\subsection{Cynk and van Straten's example}

\begin{lemma}\label{lemm:obstr}
Let $X$ be a geometrically connected smooth and proper $K$-scheme with trivial canonical bundle, and let $\omega$ be a volume form on $X$. Assume that there exists a  smooth and proper $K$-scheme $Y$ with trivial canonical bundle such that $Y$ is birational to $X$ and has good reduction. Then there exist an integer $\nu$ and an element $\alpha$ in the image of $$\mathrm{Res}^{\{1\}}_{\widehat{\mu}}\colon \mathcal{M}_k\to \mathcal{M}^{\hat{\mu}}_k$$ such that
$$Z_{X,\omega}(T)=\frac{\alpha}{1-\LL^{-\nu}T}$$
in $\cR$. In particular, $-\nu$ is the only pole of $Z_{X,\omega}(\LL^{-s})$ (viewed as an object in $\cR|_{T=\LL^{-s}}$).  
 \end{lemma}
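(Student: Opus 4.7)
The plan is a two-step reduction followed by an explicit computation on the good-reduction model. First, I would invoke Corollary \ref{cor:birat}: since $X$ and $Y$ are birational smooth proper $K$-schemes with trivial canonical bundle and $\omega'$ is the volume form pulled back from $\omega$, we have $Z_{X,\omega}(T)=Z_{Y,\omega'}(T)$ in $\cR$. This reduces the problem to computing the zeta function of $(Y,\omega')$.

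Next, I would exploit the good-reduction hypothesis: pick a smooth proper algebraic-space model $\mathcal{Y}$ of $Y$ over $R$. The special fiber $\mathcal{Y}_k$ is a smooth proper algebraic space over $k$ that is geometrically connected (because $Y$ is), hence irreducible and reduced. Since $\omega'$ is nowhere vanishing on $Y=\mathcal{Y}_K$, the horizontal divisor of $\omega'$ on $\mathcal{Y}$ is empty, while the vertical divisor is of the form $\nu\cdot\mathcal{Y}_k$ for a unique integer $\nu$; moreover the logarithmic relative canonical bundle of Subsection \ref{ss:germs} coincides with $\omega_{\mathcal{Y}/R}$ because $\mathcal{Y}_k$ is reduced. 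Thus $\mathcal{Y}$ plays the role of an snc-model for $\omega'$ with a single vertical component of multiplicity $N_0=1$ and numerical datum $\nu_0=\nu$, and no horizontal components.

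Applying the formula of Definition \ref{def:zeta2}, which collapses to a single summand, should produce
$$Z_{Y,\omega'}(T)=[\mathcal{Y}_k]\cdot\frac{\LL^{-\nu}T}{1-\LL^{-\nu}T}\quad\text{in }\cR.$$
Since $N_0=1$, the associated $\mu_{N_0}$-torsor is trivial, so $[\mathcal{Y}_k]$ carries the trivial $\widehat{\mu}$-action and lies in the image of $\mathrm{Res}^{\{1\}}_{\widehat{\mu}}$. Setting $\alpha=[\mathcal{Y}_k]$ and using the identity $\LL^{-\nu}T/(1-\LL^{-\nu}T)=1/(1-\LL^{-\nu}T)-1$ in $\cR$, this recovers the claimed expression for $Z_{X,\omega}(T)$ with $\alpha$ in the image of $\mathrm{Res}^{\{1\}}_{\widehat{\mu}}$, and $-\nu$ is indeed the unique pole of $Z_{X,\omega}(\LL^{-s})$ in the sense of Definition \ref{def:poles}.

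The principal difficulty I anticipate is that Definition \ref{def:zeta2} and the invariance Theorem \ref{thm:inv} are formulated for scheme snc-models, whereas good reduction is only guaranteed in the category of algebraic spaces. I expect this to be handled by extending the definition and the invariance statement to algebraic-space snc-models: the proof of Theorem \ref{thm:inv} rests on Abramovich-Temkin weak factorization and on the local logarithmic calculations of \cite{BuNi}, both of which apply to quasi-excellent algebraic spaces in characteristic zero. An alternative route is to use Chow's lemma for algebraic spaces together with Hironaka resolution to produce a scheme snc-model $\mathcal{Y}'$ of $\omega'$ equipped with a proper birational morphism $\mathcal{Y}'\to\mathcal{Y}$ that is an isomorphism over the generic fiber, and then invoke weak factorization to show that the zeta function computed on $\mathcal{Y}'$ agrees with the one-term expression above.
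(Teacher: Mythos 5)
Your proposal follows the same two-step route as the paper: reduce via Corollary \ref{cor:birat} to the case where the variety itself has good reduction, then compute on a smooth proper algebraic-space model over $R$, where the special fiber is a single reduced irreducible component, yielding the one-term formula with $\alpha=[\cY_k]$ carrying trivial $\hat\mu$-action. You also correctly identify the key technical wrinkle: the snc-model machinery of Section \ref{ss:germs} and Theorem \ref{thm:inv} is formulated for schemes, whereas good reduction only guarantees an algebraic-space model.

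Where the paper and your argument diverge is in how this wrinkle is dispatched. The paper simply cites Section 7 of \cite{HaNi-CY}, where the intrinsic motivic-integral definition \eqref{eq:defzeta} and its evaluation on smooth models are developed directly in the category of algebraic spaces, so the one-term expression is immediate without invoking the scheme-based snc-model formalism or weak factorization at all. Your first suggested remedy (extending Definition \ref{def:zeta2} and Theorem \ref{thm:inv} to algebraic-space snc-models) would work but is considerably more machinery than needed. Your second remedy as stated has a gap: Theorem \ref{thm:WF} compares \emph{scheme} snc-models of $\omega'$ to one another, but does not let you compare the scheme model $\cY'$ obtained via Chow's lemma and Hironaka with the algebraic-space model $\cY$. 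Knowing that $Z_{\omega'}(T)=Z_{\cY',\omega'}(T)$ is not by itself enough to see that this sum collapses to the one-term expression; you would still need either the algebraic-space version of the motivic-integral formula on $\cY$ (which is the content of the citation) or a direct analysis of the blow-up $\cY'\to\cY$ via the log-smooth calculus of \cite{BuNi}.
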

 \begin{proof}
 	By the birational invariance of the motivic zeta function (Corollary \ref{cor:birat}) we may assume that $X$ itself has good reduction. Let $\cX$ be a smooth and proper algebraic space over $R$ such that $\cX_K$ is isomorphic to $X$. The explicit formula for the motivic zeta function in \eqref{eq:defzeta} is also valid in the category of algebraic spaces, by Section 7 of \cite{HaNi-CY}. This immediately implies the result.
 \end{proof}

\begin{theorem}\label{thm:CvS}
There exists a geometrically connected smooth and proper $K$-scheme $X$ of dimension $3$ with trivial canonical bundle that has the following properties:
\begin{enumerate}
\item \label{it:montriv} the inertia group $I_K$ acts trivially on the $\ell$-adic cohomology spaces $H^m(X\times_K K^a,\Q_\ell)$, for all $m\geq 0$ and all primes $\ell$;
\item \label{it:nogoodred} there is no  smooth and proper scheme $X'$ over a finite extension $K'$ of $K$ such that $X'$ has trivial canonical bundle, $X'$ is birational to $X\otimes_K K'$ and $X'$ has good reduction over the valuation ring in $K'$;
\item \label{it:twopoles} for any volume form $\omega$ on $X$, the motivic zeta function $Z_{X,\omega}(\LL^{-s})$ has precisely two poles, and these are integers (see the proof for a more precise statement). In particular, $X$ satisfies the monodromy property.
\end{enumerate}
\end{theorem}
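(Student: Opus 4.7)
The plan is to take $X$ to be (the base change to $K$ of) the Cynk--van Straten one-parameter degeneration of Calabi--Yau threefolds from \cite{CvS}. More precisely, they exhibit a projective family over a smooth complex curve that is meromorphic at a point; completing the local ring at that point and passing to the generic fiber yields a smooth projective Calabi--Yau threefold $X$ over $K=\C\llpa t\rrpa$. Property \eqref{it:montriv} is the central assertion of \cite{CvS}: the topological monodromy around $t=0$ is trivial on all cohomology spaces. Via Remark \ref{rem:monodromy}, this translates directly into triviality of the $I_K$-action on the $\ell$-adic cohomology of $X$ for every prime $\ell$.

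The core calculation is property \eqref{it:twopoles}. I would first extract from the Cynk--van Straten construction, after possibly a further embedded resolution, an explicit snc-model $\cX$ of $X$ over $R$ and read off the numerical data $\{(N_i,\nu_i)\,|\,i\in I\}$ associated with a choice of volume form $\omega$ on $X$. Plugging these into Theorem \ref{thm:snc} gives a closed-form expression for $Z_{X,\omega}(T)$ with candidate poles $\{-\nu_i/N_i\,|\,i\in I\}$. To identify the actual poles I would follow the two-step strategy outlined in Section \ref{ss:poles}: first, regroup the summands to exhibit a set of at most two integer candidate poles; then apply the Euler--Poincar\'e realization $\mathrm{EP}\colon \mathcal{M}^{\hat{\mu}}_k\to \Z[w,w^{-1}]$ and perform a direct residue computation on $Z^{\mathrm{EP}}_{X,\omega}(T)\in\Q(w,T)$ at each candidate to confirm that neither is killed. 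Once \eqref{it:twopoles} is established, the monodromy property is automatic: by \eqref{it:montriv} the only monodromy eigenvalue of $X$ is $1$, and since the two poles are integers $s_0$ we have $\exp(2\pi i s_0)=1$, so $Z_{X,\omega}(T)\in\mathrm{Mon}_X$.

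Property \eqref{it:nogoodred} would then follow by contradiction from Lemma \ref{lemm:obstr}. Suppose there exist a finite extension $K'$ of $K$ and a smooth and proper $K'$-scheme $X'$ with trivial canonical bundle, birational to $X\otimes_K K'$, having good reduction. Since $k=\C$ is algebraically closed, $K'\cong K(m)$ for some $m\geq 1$. Applying Lemma \ref{lemm:obstr} to $(X',\omega')$, Corollary \ref{cor:birat} to the birational map $X'\dashrightarrow X\otimes_K K(m)$, and Proposition \ref{prop:bc}, we would conclude that $Z_{X\otimes_K K(m),\omega\otimes_K K(m)}(\LL^{-s})$, viewed in $\cR|_{T=\LL^{-s}}$, has a unique pole. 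But by Proposition \ref{prop:bc} its set of poles is contained in $\{ms_0\,|\,s_0\in P\}$ where $P$ is the two-element set from \eqref{it:twopoles}, and scaling by $m>0$ keeps the two elements distinct, a contradiction.

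The main obstacle will be the explicit calculation underlying \eqref{it:twopoles}. Producing an snc-model with tractable combinatorics directly from the Cynk--van Straten construction is a nontrivial exercise in resolution of singularities, and the subsequent bookkeeping of Euler characteristics and multiplicities of the strata, needed to verify via $\mathrm{EP}$ that precisely two integer candidate poles survive, demands careful cancellation arguments. The remaining ingredients of the proof reduce to direct applications of results already established earlier in the paper.
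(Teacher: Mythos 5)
Your overall strategy matches the paper's: both use the Cynk--van Straten degeneration, compute $Z_{X,\omega}(T)$ via an explicit snc-model, and confirm the two candidate poles via the Euler--Poincar\'e realization. (Be careful, though, to make the degree-2 base change $u\mapsto t^2$ explicit: in the Cynk--van Straten construction the family $\cY\to\A^1_{\Q^a}$ has monodromy of order $2$ around $u=0$, and it is only after extracting a square root of $u$ that the monodromy on $X$ becomes trivial. This base change also changes the candidate poles from $\{0,-1/2\}$ to $\{0,-1\}$.)

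There is, however, a genuine gap in your argument for \eqref{it:nogoodred}. You apply Lemma \ref{lemm:obstr} and Proposition \ref{prop:bc} to conclude that $Z_{X\otimes_K K(m),\omega\otimes_K K(m)}(\LL^{-s})$ has a unique pole, while its set of poles is contained in $\{0,-m\}$. You then declare a contradiction ``since scaling by $m>0$ keeps the two elements distinct.'' But containment in a two-element set does not mean the set of poles \emph{has} two elements. As Example \ref{exam:poles} emphasizes, the inclusion
$$\{\text{poles of } Z_{X\otimes_K K(m),\omega\otimes_K K(m)}(\LL^{-s})\}\subset\{ms_0\,|\,s_0\in P\}$$
can be strict --- for instance, $K3$ surfaces with multiple poles can acquire a unique pole after a finite base change, once a Kulikov model becomes available. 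To complete your argument you must verify that both candidates $0$ and $-m$ remain \emph{actual} poles of $Z^{(m)}_{X,\omega}(\LL^{-s})$ for every $m>0$, which requires repeating the $\mathrm{EP}$ residue computation on $Z^{(m)}_{X,\omega}(T)$ rather than just on $Z_{X,\omega}(T)$ itself. (The paper observes this is straightforward ``in the same way,'' but it is not automatic from the $m=1$ computation.)

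Finally, it's worth noting that the paper's primary argument for \eqref{it:nogoodred} does not go through the motivic zeta function at all: it compares Euler characteristics using Batyrev's theorem on birational Calabi--Yau varieties, comparing $\chi(Z)$ (the Calabi--Yau obtained from the central fiber) against $\chi$ of the smooth fibers and of a hypothetical smooth special fiber $\cX'_k$. The motivic-zeta-function argument that you proposed appears in the paper only as a secondary check, because the zeta-function obstruction of Lemma \ref{lemm:obstr} is, philosophically, a refinement of the Euler-characteristic argument. Presenting both makes the point of the paper clearer; presenting only the zeta-function argument is fine, but the residue check after base change cannot be skipped.
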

\begin{proof}
We will use the beautiful example from \cite{CvS}. Let $\Q^a$ be the algebraic closure of $\Q$ in $\C$ and fix an embedding of $\Q^a$ into $k$. Cynk and van Straten start from a singular complex Calabi-Yau threefold that arises as a double cover of $\PP^3_{\Q^a}$ ramified along a particular arrangement of $8$ planes. By carefully degenerating the arrangement and applying an explicit birational modification to the resulting family of double covers, they obtain a projective flat family 
 $f\colon \cY\to \A^1_{\Q^a}=\Spec \Q^a[u]$	with the following properties:
 \begin{itemize}
 	\item the scheme $\cY$ is regular along $\cY_0=f^{-1}(0)$;
 	\item the fiber of $\cY$ over a general point in $\A^1_{\Q^a}(\C)$ is a smooth Calabi-Yau threefold with Hodge numbers $h^{1,1}=41$ and $h^{1,2}=1$;
 	\item the special fiber $\cY_0$ is reduced, the singular locus of $\cY_0$ is a line $L$, and $\cY_0$ is double along $L$ with exactly $4$ pinch points;
 	\item the blow-up of $\cY_0$ along $L$ is a smooth Calabi-Yau threefold $Z$ with Hodge numbers   $h^{1,1}=46$ and $h^{1,2}=0$; 
 	\item the monodromy action on the cohomology of the  complex nearby fiber of $\cY$ at $u=0$ has order $2$.
 	 \end{itemize}
 	 We consider $R=k\llbr t\rrbr$ as a $\Q^a[u]$-algebra {\em via} the morphism of $\Q^a$-algebras that sends $u$ to $t^2$, and we set $\cX=\cY\otimes_{\Q^a[u]}R$.
Then $X=\cX_K$ satisfies \eqref{it:montriv}, by Remark \ref{rem:monodromy}. Although this is not explicitly stated in \cite{CvS}, it also satisfies \eqref{it:nogoodred}: suppose for a contradiction that such $K'$ and $X'$ exist, and let $\cX'$ be a smooth and proper algebraic space over the valuation ring $R'$ in $K'$ such that $\cX'_{K'}$ is isomorphic to $X'$. Then $\cX'_k$ is a smooth and proper algebraic space over $k$ with trivial canonical bundle. It has the same $\ell$-adic Euler characteristic as $X'$ by smooth and proper base change for $\ell$-adic cohomology. Batyrev's theorem on invariance of Betti numbers under $K$-equivalence \cite{batyrev} implies that $X$ and $X'$ have the same $\ell$-adic Euler characteristic. 

On the other hand, $\cX\times_R R'$ is still normal because $\cX_k$ is reduced, so that $\cX'_k$ is birational to $\cX_k\cong \cY_0\otimes_{\Q^a}k$, and therefore also to $Z$. Batyrev's theorem again implies that $\cX'_k$ has the same $\ell$-adic Euler characteristic as $Z$, contradicting the fact that $Z$ and the smooth fibers of $\cY$ have different Euler characteristics (Batyrev's theorem is only stated for algebraic varieties, but the same proof applies to algebraic spaces). 
 
 We will now show that $X$ satisfies \eqref{it:twopoles} and deduce another (but ultimately related) argument for \eqref{it:nogoodred}.
   To obtain a more complete picture, we will first compute the motivic zeta function of 
  $W=\cY\otimes_{\Q^a[u]}k\llpa u\rrpa$, and then extract a square root $t$ of $u$ to recover the variety $X=W\otimes_{k\llpa u\rrpa}K$.

 Section 3.2 in \cite{CvS} describes how to construct an snc-model for $W$ over $k\llbr u\rrbr$. Consider the blow-up $\cY'\to \cY$ at the singular line $L$ in the special fiber $\cY_0$, and set $\cW=\cY'\otimes_{\Q^a[u]}k\llbr u\rrbr$. Then $\cW$ is an snc-model for $W$. The special fiber $\cW_k$ has two irreducible components: the strict transform $E_0=Z$ of $\cY_0$, and the exceptional divisor $E_1$ of the blow-up. Since $\cY_0$ is double along $L$, we have $\cW_k=E_0+2E_1$, so that $N_0=1$ and $N_1=2$. Since $\cW_k$ has multiplicity $1$ along $E_1$, we can choose a volume $\omega_0$ on $W$ that extends to a relative volume form on $\cW\setminus E_1$ over $k\llbr u\rrbr$; equivalently, $\nu_0=0$. Since $\cY$ is regular along $\cY_0$, we have $K_{\cY'/\cY}=2E_1$, so that $\nu_1=1$ (recall that we use the logarithmic twist $\omega_{\cW/k\llbr u\rrbr}(\cW_{k,\red}-\cW_k)$ as reference line bundle in the definition of the invariants $\nu_i$). By the explicit formula in Theorem \ref{thm:snc},  
 $$Z_{W,\omega_0}(T)=[E_0^o]\frac{T}{1-T}+[\widetilde{E}_1^o]\frac{\LL^{-1}T^2}{1-\LL^{-1}T^2}+[E_{0,1}](\LL-1)\frac{\LL^{-1}T^3}{(1-T)(1-\LL^{-1}T^2)}$$ in $\mathcal{M}_k^{\hat{\mu}}\llbr T \rrbr$.
  This gives $s=0$ and $s=-1/2$ as candidate poles for $Z_{W,\omega_0}(\LL^{-s})$ and confirms that $W$ satisfies the monodromy property in Definition \ref{def:MP}: the values $1$ and $-1$ are monodromy eigenvalues of $W$, since the monodromy action on the cohomology of the  complex nearby fiber of $\cY$ at $0$ has order $2$ and it is trivial in degree $0$. In fact, using the A'Campo formula for the monodromy zeta function of $W$ (Proposition \ref{prop:acampo}), one easily computes that $1$ and $-1$ are monodromy eigenvalues in degree $3$. 
 	
 Denote by $\omega=\omega_0\otimes_{k\llpa u \rrpa}K$ the volume form induced by $\omega_0$ on $X$ {\em via} base change to $K$. A standard calculation shows that the normalization of $\cW\otimes_{k\llbr u\rrbr}R$ is an snc-model for $X$ with special fiber
 $$\cX_k=F_0+F_1$$
 where $F_0$ is isomorphic to $Z$, $F_{0,1}$ is isomorphic to $E_{0,1}$, and $F_1^o$ is isomorphic to $\widetilde{E}^o_1$.
  Using either this snc-model
 or Proposition \ref{prop:bc} to compute $Z_{X,\omega}(T)$, we find  $$Z_{X,\omega}(T)=[F_0^o]\frac{T}{1-T}+[F_1^o]\frac{\LL^{-1}T}{1-\LL^{-1}T}+[F_{0,1}](\LL-1)\frac{\LL^{-1}T^2}{(1-T)(1-\LL^{-1}T)}$$
  so that $\{-1,0\}$ is a set of candidate poles for  $Z_{X,\omega}(\LL^{-s})$. 
  
  We will prove that these are actual poles by means of the Euler-Poincar\'e realization $\mathrm{EP}$ as in  Section \ref{ss:poles}. Applying $\mathrm{EP}$ to the coefficients of $Z_{X,\omega}(T)$, we obtain the rational function 
$$\mathrm{EP}(F^o_0)\frac{T}{1-T}+\mathrm{EP}(F^o_1)\frac{w^{-2}T}{1-w^{-2}T}+\mathrm{EP}(F_{0,1})(w^2-1)\frac{w^{-2}T^2}{(1-T)(1-w^{-2}T)}$$
in $\Q(w,T)$.
 By Example \ref{exam:largestpolemon}, the largest candidate pole $0$ is an actual pole of the Euler-Poincar\'e realization of $Z_{X,\omega}(\LL^{-s})$.  The Betti numbers of $F_1$ and $F_{0,1}$ are computed in Section 3.2 of \cite{CvS}; the only thing we need is that $F_1$ has non-trivial cohomology in odd degree while $F_{0,1}$ does not, so that $\mathrm{EP}(F_1^o)=\mathrm{EP}(F_1)-\mathrm{EP}(F_{0,1})$ has a term in odd degree whereas  $\mathrm{EP}(F_{0,1})$ only has even degree terms. A direct residue calculation now shows that $w^{2}=\mathrm{EP}(\LL)$ is a pole of the Euler-Poincar\'e realization of $Z_{X,\omega}(T)$. It follows that $\{-1,0\}$ is the set of poles of $Z_{X,\omega}(\LL^{-s})$, and also of its image in $\cR|_{T=\LL^{-s}}$.

By Lemma \ref{lemm:obstr}, this implies that $X$ is not birational to a smooth and proper $K$-scheme with trivial canonical bundle and good reduction. Using Proposition \ref{prop:bc}, one checks in the same way that $Z_{X\otimes_K K(m),\omega\otimes_K K(m)}(\LL^{-s})$ has two poles for any $m>0$, which gives an alternative argument for \eqref{it:nogoodred} in the statement.
\end{proof}
\begin{remark}
	With the notations of the proof, $\cY\otimes_{\Q^a[u]}k\llbr u\rrbr$ is a minimal dlt-model for its generic fiber $W$ with integral special fiber, in accordance with Theorem \ref{thm:MMP}. 	
\end{remark}

\subsection{Voisin's result on Lefschetz degenerations without smooth fillings}
 Let $\Delta$ be an open disk around $0$ in the complex plane $\C$, and denote by $t$ the  coordinate function on $\C$. Let $d\geq 4$ be an even integer. In \cite{voisin}, Voisin considers a family $\mathscr{H}\subset \mathbb{P}^{d+1}_{\Delta}$  of projective hypersurfaces of degree $d+2$  such that the total space $\mathscr{H}$ is smooth, the morphism $\mathscr{H}\to \Delta$ is smooth away from $0\in \Delta$,  and the fiber $\mathscr{H}_0$  has a unique singularity $z$, which is an isolated ordinary double point: locally around $z$, there exists a holomorphic coordinate system $(x_0,\ldots,x_{d})$ on $\mathscr{H}$ such that the projection morphism to $\Delta$ is given by $t=x_0^2+\ldots+x_{d}^2$. Then a general fiber of $\mathscr{H}$ is a smooth projective Calabi-Yau variety, and the monodromy transformation on its  cohomology has order $2$ by the Picard-Lefschetz formula. Voisin proves that, even after extracting an arbitrary root of $t$, the family $\mathscr{H}\setminus \mathscr{H}_0$ over $\Delta\setminus \{0\}$ cannot be extended to a smooth and proper morphism $\mathscr{H}'\to \Delta$ such that the fiber over $0$ is cohomologically K\"ahler. We will now use the motivic zeta function to prove a generalized algebraic version of this statement (Theorem \ref{thm:lefschetz}); as a first step, we will explicitly compute the motivic zeta function and deduce that it satisfies the monodromy property.

 Until further notice, we allow $d$ to be any positive integer. For our calculations, we need a standard result on deformations of ordinary double points (see for instance Proposition III.2.8 in \cite{Freitag-Kiehl}). 
 Let $\cX$ be a flat $R$-scheme of finite type of pure relative dimension $d$ and with smooth generic fiber $\cX_K$. Let $z$ be an isolated ordinary double point of the special fiber $\cX_k$. Then there exists an element $a$ in the maximal ideal of $R$ such that the henselization of the local ring $\mathcal{O}_{\cX,z}$ is isomorphic as an $R$-algebra to the henselization of the local ring $$ \left(R[x_0,\ldots,x_d]/(a+x_0^2+\ldots+x_d^2)\right)_{(t,x_0,\ldots,x_d)}.$$ In particular, the completed local ring  $\widehat{\mathcal{O}}_{\cX,z}$ is $R$-isomorphic
to \begin{equation}\label{eq:formodp} 
	R\llbr x_0,\ldots,x_d\rrbr/(a+x_0^2+\ldots+x_d^2).
\end{equation} 
Our assumption that $\cX$ has smooth generic fiber implies that $a\neq 0$.
We call the $t$-adic valuation of $a$ the {\em modulus} of $\cX$ at $z$; it depends only on $\cX$ and $z$, and not on the choice of $a$. 

 We denote by $\Q_d$ the class in $\mathcal{M}^{\hat{\mu}}_k$ of a smooth projective quadric of dimension $d$ with trivial $\hat{\mu}$-action. We also set $\Q_0=2[\Spec k]$. We further denote by $\Q'_d$ the class in $\mathcal{M}^{\hat{\mu}}_k$ of a double cover of $\mathbb{P}^d_k$ ramified along a smooth quadric hypersurface, where $\mu_2$ acts through the involution on the double cover. These definitions are independent of the choice of the various quadrics, by projective equivalence of smooth quadrics. 

\begin{theorem}\label{thm:odp}
Let $\cX$ be a proper flat $R$-scheme such that:
\begin{itemize}
\item  the generic fiber $X=\cX_K$ is smooth and geometrically connected, of dimension $d>0$, and has trivial canonical bundle; 
\item the special fiber $\cX_k$ has finitely many singular points $z_1,\ldots,z_r$ and each of these singularities is an ordinary double point on $\cX_k$.
\end{itemize}
We denote by $m_i$ the modulus of $\cX$ at the singularity $z_i$, and by $\cX_{k,\mathrm{sm}}$ the $k$-smooth locus of $\cX_k$.
 Let $\omega$ be a volume form on $X$ that extends to a relative volume form on $\cX\setminus \{z_1,\ldots,z_r\}$ over $R$. 
   
 For every positive integer $m$, we set
 \begin{equation}\label{eq:odpeven}
 \begin{aligned}	
 Z_m(T)	=& (\Q_d-\Q_{d-1})\frac{\LL^{m(1-d)/2}T}{1-\LL^{m(1-d)/2}T}\\ &  +(\LL-1)\Q_{d-1}\frac{\LL^{m(1-d)/2}T^2+\sum_{j=1}^{(m/2)-1}\LL^{(1-d)j}T }{(1-T)(1- \LL^{m(1-d)/2}T) }  
 \end{aligned}
 \end{equation}
 when $m$ is even, and 

\begin{equation}\label{eq:odpodd}
	\begin{aligned}	
		Z_m(T)	=& (\Q'_d-\Q_{d-1})\frac{\LL^{m(1-d)}T^2}{1-\LL^{m(1-d)}T^2} \\ & +\frac{(\LL-1)\Q_{d-1}}{(1-T)(1- \LL^{m(1-d) }T^2) }\Bigl(\LL^{m(1-d)} T^3\\ &+  \sum_{j=1}^{(m-1)/2}\LL^{(1-d)j}T+ \sum_{j=(m+1)/2}^{m-1}\LL^{(1-d)j}T^2\Bigr)
	\end{aligned}
\end{equation}
 when $m$ is odd. Then the motivic zeta function of $(X,\omega)$ is given by
  
\begin{equation}\label{eq:odpzeta}
Z_{X,\omega}(T)= [\cX_{k,\mathrm{sm}}]\frac{T}{1-T}
+\sum_{i=1}^r Z_{m_i}(T)
\end{equation}
in $\mathcal{M}_k^{\hat{\mu}}\llbr T \rrbr$. 
 The poles of $Z_{X,\omega}(\LL^{-s})$ are $0$ and $m_i(1-d)/2$, for $i=1,\ldots,r$, except when $d=2$: then we must discard the values $m_i(1-d)/2$ where  $m_i$ is even.  The set of poles remains unchanged when we replace $Z_{X,\omega}(T)$ by its image in $\cR$.
 Moreover, the variety $X$ satisfies the monodromy property.
\end{theorem}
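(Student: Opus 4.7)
The plan is to reduce to a local calculation at each ODP, perform an explicit snc-resolution, identify the actual poles via the Euler--Poincar\'e realization, and verify the monodromy property by a Picard--Lefschetz analysis. By the formal isomorphism \eqref{eq:formodp}, the snc-resolution and associated numerical data depend only on $(m_i, d)$ at each $z_i$. Construct $\widetilde{\cX} \to \cX$ which is the identity away from the $z_i$ and at each $z_i$ performs a fixed resolution of $V(t^{m_i} + x_0^2 + \cdots + x_d^2)$. Applying Theorem \ref{thm:snc} to $\widetilde{\cX}$ together with the scissor relations, the strict transform of $\cX_k$ is smooth with multiplicity $1$ and $\omega$ is regular along it, so it contributes $[\cX_{k,\mathrm{sm}}] \cdot T/(1-T)$; each $z_i$ then contributes an independent term $Z_{m_i}(T)$ depending only on $m_i$ and $d$. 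The task thus reduces to computing $Z_m(T)$ for a single ODP of modulus $m$.

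For the local resolution, blow up the origin of $\mathbb{A}^{d+2}_k$. A chart computation in $x_0 \neq 0$, with $t = x_0 s$ and $x_i = x_0 y_i$ for $i \geq 1$, shows that for $m = 1$ the strict transform is smooth, the exceptional $E_1 \cong \mathbb{P}^d$ appears with multiplicity $N_1 = 2$ in the special fiber (since locally on $\widetilde{\cX}$ one has $t = -x_0^2(1 + \sum y_i^2)$), and the normalization of the base change $\widetilde{\cX} \otimes_R R(2)$ produces a $\mu_2$-torsor over $E_1$ which is a double cover of $\mathbb{P}^d$ ramified along $E_0 \cap E_1 \cong \Q_{d-1}$ --- that is, of class $\Q'_d$. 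For $m \geq 2$, the exceptional instead has $N_1 = 1$ and the strict transform has a residual ODP of modulus $m-2$ at the apex of the projectivized tangent cone $V(\sum X_i^2) \subset \mathbb{P}^{d+1}$. Iterating $\lceil m/2 \rceil$ times yields an snc-model whose special fiber is a chain of divisors meeting consecutively along smooth quadrics of class $\Q_{d-1}$, all with multiplicity $1$ except, for $m$ odd, the final exceptional with $N = 2$ and class $\Q'_d$. The intermediate exceptionals are $\mathbb{P}^1$-bundles over $\Q_{d-1}$ of class $(1 + \LL)\Q_{d-1}$, and the weights $\nu_j$, read off from the pullback of the Poincar\'e residue form, form an arithmetic progression with common difference $d-1$. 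Plugging into \eqref{eq:snc} and telescoping the geometric series along the chain --- the intermediate denominators $1 - \LL^{-(d-1)j} T$ cancel pairwise --- yields \eqref{eq:odpeven} and \eqref{eq:odpodd}.

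For the pole analysis, the denominators give $\{0,\, m(1-d)/2\}$ as candidate poles. The pole at $0$ is actual by Example \ref{exam:largestpole}. For the candidate $m(1-d)/2$, apply the Euler--Poincar\'e realization $\mathrm{EP}$ from Section \ref{ss:poles} and perform a direct residue computation at $w^{m(d-1)}$. The leading coefficient involves $\mathrm{EP}(\Q_d - \Q_{d-1})$ (for $m$ even) or $\mathrm{EP}(\Q'_d - \Q_{d-1})$ (for $m$ odd); both are nonzero except in the case $d = 2$ and $m$ even, where a careful accounting of all intersection contributions produces a total cancellation --- this is the exclusion clause. Since $\mathrm{EP}$ factors through $\mathcal{M}_k^{\hat{\mu}}[(1 - \LL^{-m})^{-1}]_{m > 0}$, the conclusion also holds for $Z_{X,\omega}(T)$ viewed in $\cR$.

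For the monodromy property, the local nearby fiber $V(\epsilon^m + \sum x_i^2)$ near $\epsilon = 0$ becomes, after the substitution $y_i = x_i/\sqrt{-\epsilon^m}$, the unit quadric $\sum y_i^2 = 1$, which is homotopy equivalent to $S^d$. The monodromy around $\epsilon = 0$ acts on the $y_i$ by $(-1)^m$ (the sign ambiguity of $\sqrt{-\epsilon^m}$), and hence on $H^d(S^d)$ by degree $(-1)^{m(d+1)}$; the same eigenvalue can be read off from the A'Campo formula (Proposition \ref{prop:acampo}) applied to our snc-model. The eigenvalue is $-1$ precisely when $m$ is odd and $d$ is even. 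Comparing with $\exp(2\pi i s_0)$ for each pole: integer $s_0$ give $1$, while the half-integer $s_0 = m_i(1-d)/2$ (occurring exactly when $m_i$ is odd and $d$ is even) gives $-1$, matching. The main obstacle is the parity-dependent iterated blowup analysis and the verification that the $\mu_2$-torsor from the multiplicity-$2$ exceptional has class $\Q'_d$; this is the geometric mechanism driving the odd/even dichotomy in the closed-form expressions.
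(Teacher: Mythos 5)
Your route to the formula differs from the paper's in an interesting way. The paper resolves only the modulus-$1$ model $\cV=\Spec R[x_0,\ldots,x_d]/(t+x_0^2+\cdots+x_d^2)$ by a single blow-up, computes a local ``tube'' zeta function $Z_{\cV,O,\theta}(T)$ supported at the origin, and then obtains the modulus-$m$ contribution as $Z^{(m)}_{\cV,O,\theta}(T)$ via the base-change property of these local zeta functions (the analogue of Proposition \ref{prop:bc}), reducing \eqref{eq:odpeven}--\eqref{eq:odpodd} to a lattice-point generating-series computation. You instead resolve $V(t^m+\sum x_i^2)$ directly by $\lceil m/2\rceil$ point blow-ups and feed the resulting chain into the snc formula; your description of the chain (intermediate exceptionals of multiplicity $1$ with $\nu_j=j(d-1)$ and open strata of class $(\LL-1)\Q_{d-1}$, final exceptional $\Q_d$ with $N=1$ for $m$ even, resp.\ $\PP^d$ with $N=2$ and $\mu_2$-cover of class $\Q'_d$ for $m$ odd) is correct, and the telescoping does reproduce \eqref{eq:odpeven}--\eqref{eq:odpodd}. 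This is a legitimate alternative, at the cost of the iterated resolution and of justifying that the formal isomorphism \eqref{eq:formodp} determines the global snc-model and its numerical data; the paper's formalism (invariance of the tube zeta function under isomorphisms of formal completions, plus additivity in the support) is exactly what makes that reduction clean, and your write-up should make this step explicit rather than asserting it.

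The genuine gap is in the exclusion clause for $d=2$ and $m_i$ even. You propose to discard these candidate poles because the residue of the Euler--Poincar\'e realization vanishes. By Definition \ref{def:poles} this proves nothing: $\mathrm{EP}$ has a large kernel, so a realization morphism can certify that a candidate \emph{is} a pole, but never that it is not one. To exclude $m_i(1-d)/2$ you must exhibit a set of candidate poles not containing it, i.e.\ rewrite $Z_{m_i}(T)$ without the denominator $1-\LL^{m_i(1-d)/2}T$. This is what the paper does: using $\Q_2=(\LL+1)^2$ and $\Q_1=\LL+1$, one checks the identity $Z_{m_i}(T)=(\LL+1)T/(1-T)$ in $\mathcal{M}^{\hat{\mu}}_k\llbr T\rrbr$ (geometrically, this reflects the small resolution of the germ in algebraic spaces). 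Relatedly, for the candidates you do keep, the residue computation must account for both summands of $Z_m(T)$: the second one shares the denominator and contributes at the same top degree $w^{2d}$, so pointing at $\mathrm{EP}(\Q_d-\Q_{d-1})$ or $\mathrm{EP}(\Q'_d-\Q_{d-1})$ alone does not settle non-vanishing of the residue; this is the exercise the paper leaves to the reader, and it does need to be carried out.
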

\begin{proof}
Let $\cV$ be a separated flat $R$-scheme of finite type, and set $V=\cV_K$.  We assume that $V$ is smooth and has trivial canonical bundle. Let  $\theta$ be a volume form on $V$ and let $W$ be a subscheme of $\cV_k$. Then one can define a motivic zeta function 
$Z_{\cV,W,\theta}(T)$ in $\mathcal{M}^{\hat{\mu}}_k\llbr T\rrbr$ that measures how the pair $(V,\theta)$ degenerates along $W$. It is given by the generating series
\begin{equation}\label{eq:zetasupp}
Z_{\cV,W,\theta}(T)=\sum_{n>0}\left(\int_{]W[\otimes_K K(n)} |\theta\otimes_K K(n)| \right)T^n	
\end{equation}		
where $]W[$ denotes the punctured tube of $\cV$ along $W$ in the category of rigid analytic $K$-varieties; that is, $]W[$ is the generic fiber of the formal completion of $\cV$ along $W$. We refer to \cite{Ni-trace} for the definition of the motivic integrals in the coefficients, and to \cite{hartmann-motint} for a discussion of the $\hat{\mu}$-action. For our purposes, we only need the following properties.

\begin{enumerate}
\item The zeta function $Z_{\cV,W,\theta}(T)$ only depends on $]W[$ and the restriction of $\theta$ to $]W[$. In particular, it is invariant under the following operations: 
\begin{itemize}
	\item replacing $(\cV,W,\theta)$ by another triple $(\cV',W',\theta')$ such that there exists an isomorphism of formal $R$-schemes $\widehat{\cV/W}\to \widehat{\cV'/W'}$ that identifies the restrictions of $\theta$ and $\theta'$ to the generic fibers; 
	\item proper birational modifications of $\cV$ that are isomorphisms on the generic fiber. 
\end{itemize}
 It is also invariant under multiplying $\theta$ with an invertible regular function on $\cV$. 

\item The zeta function $Z_{\cV,W,\theta}(T)$ is additive in $W$: if $\{W_1,\ldots,W_r\}$ is a partition of $W$ into subschemes, then 
$$Z_{\cV,W,\theta}(T)=Z_{\cV,W_1,\theta}(T)+\ldots+Z_{\cV,W_r,\theta}(T).$$

\item  The analog of Theorem \ref{thm:snc} is satisfied:
if $h\colon \cV'\to \cV$ is a proper morphism such that $h_K\colon \cV'_K\to \cV_K$ is an isomorphism, $\cV'$ is regular and $R$-flat, and $\cV'_k=\sum_{i\in I}N_iE_i$ is a strict normal crossings divisor, then 
\begin{equation}\label{eq:sncsupp}
	Z_{\cV,W,\theta}(T)=\sum_{\emptyset \neq J\subset I}[\widetilde{E}_J^o\cap h^{-1}(W)](\LL-1)^{|J|-1}\prod_{j\in J}\frac{\LL^{-\nu_j}T^{N_j}}{1-\LL^{-\nu_j}T^{N_j}}
\end{equation}
where the values $\nu_i$ and the schemes $\widetilde{E}^o_J$ are defined as in Theorem \ref{thm:snc}.

\item The analog of Proposition \ref{prop:bc} is satisfied: for every positive integer $m$, we have 
$$Z_{\cV\otimes_R R(m),W,\theta\otimes_K K(m)}(T)=Z^{(m)}_{\cV,W,\theta}(T).$$

\item If $\cV$ is proper over $R$, then 
$Z_{\cV,\cV_k,\theta}(T)=Z_{\cV_K,\theta}(T)$.		
\end{enumerate}

We start from the regular scheme 
  $$\cV=\Spec R[x_0,\ldots,x_d]/(t+x_0^2+\ldots+x_d^2).$$
We denote by $O$ the origin of $\cV_k$. Let $\theta$ be a relative volume form on $\cV\setminus \{O\}$ over $R$. 
  Blowing up $\cV$ at the point $O$, we obtain a regular scheme $\cV'$ such that $\cV'_k$ has strict normal crossings. We denote by $E_0$ the strict transform of $\cV_k$ in $\cV'$; this scheme is irreducible, except when $d=1$.
     The exceptional divisor $E_1$ of the blow-up is isomorphic to $\Pro^{d}_k$ and intersects $E_0$ along a smooth quadric.   Every irreducible component of $E_0$ has multiplicity $1$ in $\cV'_k$, and $E_1$ has multiplicity $2$. The relative canonical divisor of the blow-up is  $ K_{\cV'/\cV}=dE_1$.
  
  For every positive integer $n$, we denote by $\cV'(n)$ the normalization of $\cV'\otimes_R R(n)$. Let $\widetilde{E}_1$ be the inverse image  of $E_1$ in $\cV'(2)$. This is is a double cover of $E_1\cong \Pro^d_k$ ramified along the smooth quadric $E_0\cap E_1$; it is a smooth $d$-dimensional quadric and $\mu_2$ acts through the involution on the double cover.  
  Plugging this information into the formula \eqref{eq:sncsupp}, we find 
  \begin{equation}\label{eq:odpzeta0}
  Z_{\cV,O,\theta}(T)=(\Q'_d-\Q_{d-1})\frac{\LL^{1-d}T^2}{1-\LL^{1-d}T^2}+(\LL-1)\Q_{d-1}\frac{\LL^{1-d}T^3}{(1-T)(1-\LL^{1-d}T^2)}.
  \end{equation}
	
 By the formal description of ordinary double points in \eqref{eq:formodp}, we can find for each singular point $z_i$ of $\cX_k$ a commutative diagram of rings 
		\begin{center}
	\begin{tikzcd}
	\widehat{\mathcal{O}}_{\cV\otimes_R R(m_i),O} \arrow[r, "\sim"] &	
	\widehat{\mathcal{O}}_{\cX,z_i}
		\\ R(m_i) \arrow[r,"\sim"] \arrow[u] &  \arrow[u] R
	\end{tikzcd}
	\end{center} 
where the upper horizontal arrow is an isomorphism and the lower horizontal arrow is an isomorphism of $k$-algebras that sends $\sqrt[m_i]{t}$ to a uniformizer in $R$. Consequently, we can write 
\begin{eqnarray*}
Z_{X,\omega}(T)&=& Z_{\cX,\cX_k,\omega}(T)
\\ &=& Z_{\cX, \cX_{k,\mathrm{sm}},\omega}(T)+\sum_{i=1}^r Z_{\cX,z_i,\omega}(T)
\\ &=& Z_{\cX, \cX_{k,\mathrm{sm}},\omega}(T)+\sum_{i=1}^r Z^{(m_i)}_{\cV,O,\theta}(T).
\end{eqnarray*}
Since $\omega$ extends to a relative volume form on the $R$-smooth locus of $\cX$, we have
$$Z_{\cX,\cX_{k,\mathrm{sm}},\omega}(T)=[\cX_{k,\mathrm{sm}}]\frac{T}{1-T}.$$
 Using formula \eqref{eq:odpzeta0} for $Z(\cV,O,\theta)(T)$, we can also compute each term $Z^{(m_i)}(\cV,O,\theta)(T)$. 
   Working out the combinatorics (generating series of lattice points in an appropriate $2$-dimensional rational polyhedral cone), one obtains the expression for $Z_{m_i}(T)$ in \eqref{eq:odpeven} or \eqref{eq:odpodd}, depending on the parity of $m_i$.

The candidate poles of expression \eqref{eq:odpzeta} are $0$ and $m_i(1-d)/2$, for $i=1,\ldots,r$. It follows from Example \ref{exam:largestpole} that the largest candidate pole, $0$, is a pole of $Z_{X,\omega}(\LL^{-s})$. Next, we show that $m_i(1-d)/2$ is not a pole of $Z_{X,\omega}(\LL^{-s})$ if $d=2$ and $m_i$ is even. Since $\Q_2=(\LL+1)^2$ and $\Q_1=\LL+1$, we can rewrite $Z_{m_i}(T)$ as 
\begin{equation*}\label{eq:smallres}
\begin{array}{lcl}
Z_{m_i}(T)&=& \displaystyle \LL(\LL+1)\frac{\LL^{-m_i/2}T}{1-\LL^{-m_i/2}T}\\[2ex] & &  \displaystyle +(\LL^2-1)\frac{\LL^{-m_i/2}T^2+\sum_{j=1}^{(m_i/2)-1}\LL^{-j}T }{(1-T)(1- \LL^{-m_i/2}T) }
\\[2ex] &=& \displaystyle (\LL+1)\frac{\LL^{1-m_i/2}T(1-T) +(\LL-1)\LL^{-m+i/2}T^2+(1-\LL^{1-m_i/2})T }{(1-T)(1- \LL^{-m_i/2}T) } 
\\[2ex] &=& \displaystyle \frac{(\LL+1)T}{(1-T)}.
\end{array}
\end{equation*}
Another way to obtain this formula is to use the existence of a small resolution of singularities for the germ $(\cX,z_i)$ in the category of algebraic spaces, and compute  $Z_{\cX,z_i,\omega}(T)$ on this algebraic space; the coefficient $\LL+1$ is the class in $\mathcal{M}_k^{\hat{\mu}}$ of the exceptional curve.

 If $d\neq 2$ or $m_i$ is odd, one can again use the Euler-Poincar\'e realization from Section \ref{ss:poles} to check that $m_i(1-d)/2$ is a pole of $Z_{\cX,\omega}(\LL^{-s})$. We leave the computation as an exercise for the reader. Our argument shows that the set of poles remains unchanged when we replace $Z_{X,\omega}(T)$ by its image in $\cR$.

 The value $1$ is a monodromy eigenvalue on the degree $0$ cohomology of $X$ (and also on the degree $d$ cohomology, by Theorem 3.3.3 in \cite{HaNi-CY}).
 If $d$ is odd or $r=0$, then all the poles of $Z_{X,\omega}(\LL^{-s})$ are integers, so that $X$ satisfies the monodromy property. If $d$ is even and $r\geq 1$, then all the poles other than $0$ are half-integers, and $-1$ is also a monodromy eigenvalue on the degree $d$ cohomology of  $X$ by the \'etale Picard-Lefschetz formula (Theorem 3.4 of Expos\'e XV in \cite{sga7b}). Consequently, the monodromy property is satisfied in this case, as well.
\end{proof}

\begin{corollary}\label{cor:odp}
Let $Y$ be a geometrically connected smooth proper $K$-scheme with trivial canonical bundle. Assume that $Y$ is birational to the generic fiber of an $R$-scheme $\cX$ as in the statement of Theorem \ref{thm:odp}. Then $Y$ satisfies the monodromy property up to inverting the elements $1-\LL^{-m}$ in $\mathcal{M}^{\hat{\mu}}_k\llbr T \rrbr$ for all positive integers $m$.
\end{corollary}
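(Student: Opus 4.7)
The plan is to combine the explicit monodromy property established in Theorem \ref{thm:odp} for the generic fiber $X := \cX_K$ with the birational invariance proved in Theorem \ref{thm:moncon}. There is no serious obstacle: once those two ingredients are in hand, the corollary is essentially formal.

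First I would observe that the hypotheses of the corollary guarantee that $X = \cX_K$ itself satisfies the conditions of Theorem \ref{thm:odp}. In particular, $X$ is a geometrically connected smooth proper $K$-scheme with trivial canonical bundle of positive dimension, and the special fiber $\cX_k$ has only ordinary double point singularities. Applying Theorem \ref{thm:odp}, I would conclude that for any volume form $\omega$ on $X$, the motivic zeta function $Z_{X,\omega}(T)$ already lies in $\mathrm{Mon}_X$, and hence a fortiori in $\mathrm{Mon}_X[(1-\LL^{-m})^{-1}]_{m>0}$, so that $X$ satisfies the monodromy property.

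Next I would invoke Theorem \ref{thm:moncon} applied to the birational pair $(X,Y)$. Both varieties are geometrically connected smooth proper $K$-schemes with trivial canonical bundle, and by hypothesis they are birational. By Proposition \ref{prop:eigen} they share the same characteristic polynomials in every degree, hence the same set $\mathcal{E}_X = \mathcal{E}_Y$ of monodromy eigenvalues, so that $\mathrm{Mon}_X$ and $\mathrm{Mon}_Y$ coincide as subrings. Theorem \ref{thm:moncon} then transfers the property from $X$ to $Y$: for any volume form $\omega_Y$ on $Y$, the image of $Z_{Y,\omega_Y}(T)$ in $\cR$ lies in $\mathrm{Mon}_Y[(1-\LL^{-m})^{-1}]_{m>0}$, which is exactly the assertion of the corollary.

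The only small verification, beyond invoking the two theorems, is the identification $\mathrm{Mon}_X = \mathrm{Mon}_Y$; but this is immediate from the definition of $\mathrm{Mon}_X$ in terms of $\mathcal{E}_X$, together with Proposition \ref{prop:eigen}. All genuine work has been carried out in Theorem \ref{thm:odp} (where the motivic zeta function of $X$ is computed explicitly and its poles are analyzed) and in Theorem \ref{thm:moncon} (where the weak-factorization-based invariance of the zeta function in $\cR$ is applied).
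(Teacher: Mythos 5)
Your proof is correct and takes essentially the same approach as the paper, which simply cites Theorems \ref{thm:moncon} and \ref{thm:odp}. You have just spelled out the intermediate step (that $\mathrm{Mon}_X = \mathrm{Mon}_Y$ via Proposition \ref{prop:eigen}, which is already baked into the statement of Theorem \ref{thm:moncon}) in more detail.
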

\begin{proof} This follows immediately from Theorems \ref{thm:moncon} and \ref{thm:odp}.
\end{proof}
		
We are now ready to prove an algebraic version of Voisin's result.		
		
\begin{theorem}\label{thm:lefschetz}	
Let $\cX$ be a proper flat $R$-scheme such that:
\begin{itemize}
	\item  the generic fiber $X=\cX_K$ is smooth and geometrically connected, of even dimension $d\geq 4$, and has trivial canonical bundle; 
	\item the special fiber $\cX_k$ has finitely many singular points $z_1,\ldots,z_r$, with $r\geq 1$, and each of these singularities is an ordinary double point on $\cX_k$.
\end{itemize}
Then the monodromy action on the $\ell$-adic cohomology of $X$ has order $1$ or $2$, but
there is no  smooth and proper scheme $X'$ over a finite extension $K'$ of $K$ such that $X'$ has trivial canonical bundle, $X'$ is birational to $X\otimes_K K'$ and $X'$ has good reduction over the valuation ring in $K'$. 
\end{theorem}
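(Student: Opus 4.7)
The plan is to treat the two assertions separately: the bound on the monodromy order by a local Picard-Lefschetz analysis at each singularity, and the non-existence of a smooth filling by the motivic obstruction of Lemma \ref{lemm:obstr}, combined with the explicit zeta function computation of Theorem \ref{thm:odp} and the birational invariance of Corollary \ref{cor:birat}.

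For the monodromy assertion, I would argue locally at each singular point $z_i$ via the \'etale Picard-Lefschetz formula (Expos\'e XV of \cite{sga7b}), extending Voisin's argument in \cite{voisin} beyond the modulus-one case. The formal local model $ut^{m_i}=x_0^2+\cdots+x_d^2$ shows that the vanishing cycle sheaf $R\Phi\Q_\ell$ is supported at $z_i$, concentrated in middle degree $d$, and of rank one; for even $d$, the associated Picard-Lefschetz transformation $T_{\mathrm{PL},i}$ acts on it as $-1$. Since a loop around $t=0$ in $K$ corresponds to $m_i$ loops around the standard ODP parameter $s=t^{m_i}$, the $I_K$-action on the vanishing cycle at $z_i$ is $T_{\mathrm{PL},i}^{m_i}=(-1)^{m_i}$, of order dividing $2$. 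Combining this with the trivial $I_K$-action on $\Q_\ell\hookrightarrow R\Psi\Q_\ell$, and using the splitting of the associated long exact sequence by weights (ODP vanishing cycles are pure of weight $d$, matching the weight of $H^d$ of a smooth model of $\cX_k$), one concludes that the global monodromy operator $T$ on $H^\bullet(X\times_K K^a,\Q_\ell)$ satisfies $T^2=\mathrm{id}$.

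For the non-existence statement, I proceed by contradiction. Since $k$ is algebraically closed of characteristic zero, every finite extension of $K$ is of the form $K(n)$, so I may assume $K'=K(n)$. Suppose a good reduction model $X'$ of $X\otimes_K K(n)$ exists. Then $X'$ and $X\otimes_K K(n)$ are birational smooth and proper $K(n)$-schemes with trivial canonical bundle; by Corollary \ref{cor:birat}, for matching volume forms,
\[
Z_{X',\omega'}(T)=Z_{X\otimes_K K(n),\omega\otimes_K K(n)}(T)\quad\text{in }\cR.
\]
By Lemma \ref{lemm:obstr} applied to $X'$, the left-hand side has a unique pole in $\cR|_{T=\LL^{-s}}$, and this uniqueness is insensitive to rescalings of $\omega'$ via \eqref{eq:scalar}. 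By Proposition \ref{prop:bc}, the right-hand side equals $Z^{(n)}_{X,\omega}(T)$, so it suffices to produce two distinct poles of $Z^{(n)}_{X,\omega}(\LL^{-s})$.

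To that end, I would use the explicit formula \eqref{eq:odpzeta} for $Z_{X,\omega}(T)$, together with the elementary fact that $Z\mapsto Z^{(n)}$ sends a factor $1/(1-\LL^aT^b)$ to $1/(1-\LL^{an/\gcd(n,b)}T^{b/\gcd(n,b)})$. The resulting candidate poles of $Z^{(n)}_{X,\omega}(\LL^{-s})$ are $0$ and $nm_i(1-d)/2$ for $i=1,\ldots,r$; since $d\geq 4$ is even and $r\geq 1$, each $nm_i(1-d)/2$ is strictly negative, hence distinct from $0$. The pole at $s=0$ is always genuine by Example \ref{exam:largestpole}. To confirm that $s=nm_i(1-d)/2$ is an actual pole for some fixed $i$, I would apply the Euler-Poincar\'e realization $\mathrm{EP}$ and perform a residue calculation on $\mathrm{EP}(Z^{(n)}_{X,\omega}(T))\in\Q(w,T)$, exactly as in the last paragraph of the proof of Theorem \ref{thm:odp}: the non-vanishing of the Betti numbers of the smooth quadric $\Q_{d-1}$ (or of the double cover $\Q'_d$ when $nm_i$ is odd) ensures that the corresponding pole at $T=w^{nm_i(d-1)}$ does not cancel. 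This residue computation is the main technical obstacle; the overall structure of the argument is parallel to the Cynk-van Straten case treated in Theorem \ref{thm:CvS}.
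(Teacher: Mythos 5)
Your proposal is essentially correct and follows the same two-part strategy as the paper (Picard--Lefschetz for the order bound, motivic obstruction for non-filling), but the second half is less efficient than the paper's argument, and you leave the key residue calculation unaddressed when a shortcut is available.

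On the monodromy bound, your detailed vanishing-cycle expansion is not wrong in outcome, but the paper simply cites the \'etale Picard--Lefschetz formula (Theorem~3.4 of Expos\'e XV in \cite{sga7b}), which gives a sharper conclusion: the $I_K$-action is trivial if all moduli $m_i$ are even, and has order exactly $2$ otherwise. Your weight-splitting argument for globalizing the local involutions is somewhat loose and would need care to make rigorous; the direct citation is cleaner.

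On the non-filling statement, the paper first observes that the entire statement is invariant under replacing $R$ by $R(n)$: the scheme $\cX\otimes_R R(n)$ is again a proper flat $R(n)$-scheme satisfying the hypotheses of Theorem~\ref{thm:lefschetz} (the special fiber is unchanged since $k$ is algebraically closed, each $z_i$ remains an ordinary double point, and its modulus becomes $n m_i$). This reduces the claim to the case $K'=K$, where Corollary~\ref{cor:birat}, Lemma~\ref{lemm:obstr}, and Theorem~\ref{thm:odp} immediately give a contradiction. By contrast, you keep $K'=K(n)$ and propose to redo the residue analysis directly on $Z^{(n)}_{X,\omega}$, acknowledging this as an unresolved technical obstacle. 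This duplicates work: once you identify, via Proposition~\ref{prop:bc}, $Z^{(n)}_{X,\omega}(T)$ with $Z_{X\otimes_K K(n),\,\omega\otimes_K K(n)}(T)$, you should recognize that $X\otimes_K K(n)$ is the generic fiber of the base-changed model $\cX\otimes_R R(n)$, to which Theorem~\ref{thm:odp} applies verbatim with moduli $nm_i$. Since $d\neq 2$ and $r\geq 1$, that theorem already asserts that the zeta function has at least two poles in $\cR$, with no new residue computation needed. Incorporating this observation closes the gap in your proposal and recovers the paper's argument.
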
	
\begin{proof}	
The 
\'etale version of the Picard-Lefschetz formula (Theorem 3.4 in Expos\'e XV of \cite{sga7b}) implies that the action of $I_K$ on the $\ell$-adic cohomology of $X$ is trivial if the modulus of $\cX$ at each singularity $z_i$ is even, and has order $2$ otherwise. 

The remainder of the statement is invariant under finite extensions of $K$, so that it suffices to prove the result for $K'=K$.
 Let $\omega$ be a volume form on $X$. By the birational invariance of the motivic zeta function (Corollary \ref{cor:birat}) and Lemma \ref{lemm:obstr}, it suffices to show that the motivic zeta function 
$Z_{X,\omega}(\LL^{-s})$ has at least two poles when we view it as an object in $\cR$. This follows from Theorem \ref{thm:odp}, because $d\neq 2$ and $r\geq 1$.
\end{proof}

\end{document}